\title{Configuration spaces of clusters\\as $E_d$-algebras}
\author{Florian Kranhold}
\newcommand{\tref}[2]{#1~\ref{#2}}
\ifundef{\abstract}{}{
  \patchcmd{\abstract}{\quotation}{\quotation\noindent\ignorespaces}{}{}}
\newcommand{\mail}[1]{\upshape\href{mailto:#1}{\textsf{#1}}}
\xpatchcmd\swappedhead{~}{.~}{}{}
\theoremstyle{plain}
\newtheorem{theo}   {Theorem}[section]
\newtheorem{lem}    [theo]{Lemma}
\newtheorem{cor}    [theo]{Corollary}
\theoremstyle{definition}
\newtheorem{defi}   [theo]{Definition}
\newtheorem{rem}    [theo]{Remark}
\newtheorem{expl}   [theo]{Example}
\newtheorem{constr} [theo]{Construction}
\newtheorem{nota}   [theo]{Notation}
\newtheoremstyle{quote}%
{-18pt}%
{0pt}%
{\itshape}%
{0pt}%
{\bfseries}%
{}%
{5px}%
{\thmname{#1}\thmnumber{\hspace*{3px}#2}\thmnote{ (#3)}.}
\theoremstyle{quote}
\newtheorem{qThm}{Theorem}
\newtheoremstyle{Alph}%
{0pt}%
{.5\baselineskip}%
{\itshape}%
{0pt}%
{\bfseries}%
{}%
{5px}%
{\thmname{#1}\thmnumber{\hspace*{3px}#2}\thmnote{ (#3)}.}
\theoremstyle{Alph}
\newtheorem{aThm}{Theorem}
\newlength{\fixmidfigure}
\newenvironment{tFigure}{
  \setlength{\fixmidfigure}{\lastskip}\addvspace{-\lastskip}
  \begin{figure}[t]
  }{
  \end{figure}
  \addvspace{\fixmidfigure}
}
\tikzset{
  }
\let \le       \leqslant
\let \ge       \geqslant
\let \setminus \smallsetminus
\let \emptyset \varnothing
\let \phi      \varphi
\let \epsilon  \varepsilon
\let \theta    \vartheta
\let \binom    \tbinom
\LetLtxMacro\orgvdots\vdots
\LetLtxMacro\orgddots\ddots
\DeclareRobustCommand\vdots{%
  \mathpalette\@vdots{}%
}
\newcommand*{\@vdots}[2]{%
  \sbox0{$#1\cdotp\cdotp\cdotp\m@th$}%
  \sbox2{$#1.\m@th$}%
  \vbox{%
    \dimen@=\wd0 %
    \advance\dimen@ -3\ht2 %
    \kern.5\dimen@
    \dimen@=\wd2 %
    \advance\dimen@ -\ht2 %
    \dimen2=\wd0 %
    \advance\dimen2 -\dimen@
    \vbox to \dimen2{%
      \offinterlineskip
      \copy2 \vfill\copy2 \vfill\copy2 %
    }%
  }%
}
\newcommand{\bm}  [1]{\mathbold{#1}}
\newcommand{\quot}[2]{\left.\raise0.5ex\hbox{$#1$} \right/\hspace*{-2px}\lower0.5ex\hbox{$#2$}}
\newcommand{\on}  [1]{\mathrm{#1}}
\newcommand{\ul}[1]{%
  \uline{\smash{\phantom{#1}}}%
  \llap{\contour{white}{$#1$}}%
}
\newcommand{\ulp}[1]{\uline{\smash{#1}}}
\newcommand{\pa}     [1]{\mathopen{}\left(#1\right)\mathclose{}}                    %
\newcommand{\abs}    [1]{\mathopen{}\left\lvert #1\right\rvert\mathclose{}}         %
\newcommand{\set}    [1]{\mathopen{}\left\{#1\right\}\mathclose{}}                  %
\newcommand{\pr}  {\on{pr}}
\newcommand{\op}  {{\on{op}}}
\newcommand{\Inj} {\mathbf{Inj}}
\newcommand{\Top} {\mathbf{Top}}
\newcommand\dirlim{\mathop{\mathpalette\varlim@{\rightarrowfill@\scriptscriptstyle}}\nmlimits@}
\newcommand\invlim{\mathop{\mathpalette\varlim@{\leftarrowfill@\scriptscriptstyle}}\nmlimits@}
\newcommand{\map}{\on{map}}
\newcommand{\swr}{\kern.6px\wr\kern.6px}
\DeclareSymbolFont{eulargesymbols}{U}{zeuex}{m}{n}
\DeclareMathSymbol{\intop}{\mathop}{eulargesymbols}{"52}
\def\centerarc[#1](#2)(#3:#4:#5)%
\C\renewcommand{\C}{\mathbb{C}}\else\newcommand{\C}{\mathbb{C}}\fi
\newcommand  {\R}   {\mathbb{R}}
\renewcommand{\S}   {\mathbb{S}}
\newcommand  {\V}   {\mathscr{V}}
\newcommand  {\Z}   {\mathbb{Z}}
\newcommand{\bbD}{\mathbb{D}}
\newcommand{\bbE}{\mathbb{E}}
\newcommand{\bE}{\mathbb{E}}
\newcommand{\bS}{\mathbb{S}}
\newcommand{\scC}{\mathscr{C}}
\newcommand{\scF}{\mathscr{F}}
\newcommand{\scO}{\mathscr{O}}
\newcommand{\bfC}{\mathbf{C}}
\newcommand{\bmX}{\bm{X}}
\newcommand{\frG}{\mathfrak{G}}
\newcommand{\frS}{\mathfrak{S}}
\frq\renewcommand{\frq}{\mathfrak{q}}\else \newcommand{\frq}{\mathfrak{q}}\fi
\newcommand{\tC}{\tilde{C}}
\DeclareMathOperator{\supp}{supp}
\definecolor{ddgreen}{rgb}{.20,.60,.20}
\definecolor{dgrey}  {rgb}{.45,.45,.45}
\definecolor{dyellow}{rgb}{.84,.69,.00}
\definecolor{dred}   {rgb}{.90,.30,.30}
\definecolor{dblue}  {rgb}{.28,.32,.80}
\definecolor{dgreen} {rgb}{.17,.65,.16}
\definecolor{bgrey}  {rgb}{.85,.85,.85}
\definecolor{byellow}{rgb}{.95,.93,.40}
\definecolor{bred}   {rgb}{.95,.60,.60}
\definecolor{bblue}  {rgb}{.50,.50,.90}
\definecolor{bgreen} {rgb}{.70,.90,.60}
\definecolor{blila}  {rgb}{.96,.84,.96}
\definecolor{bbyellow}{rgb}{.95,.92,.60}
\definecolor{bbred}   {rgb}{.98,.85,.85}
\definecolor{bbblue}  {rgb}{.70,.70,.96}
\definecolor{bbgreen} {rgb}{.80,.96,.70}
\definecolor{bblila}  {rgb}{.94,.84,.97}
\definecolor{dlila}  {rgb}{.70,.30,.70}
\definecolor{lila}   {rgb}{.80,.60,.90}
\definecolor{turq}   {rgb}{.3,.7,.7}
\definecolor{grey}   {rgb}{.65,.65,.65}
\definecolor{dorange}   {rgb}{.9,.4,.2}
\numberwithin{equation}{section}
\begin{document}

\maketitle

\begin{abstract}
  It is a classical result that configuration spaces of labelled particles in
  $\R^d$ are free $E_d$-algebras and that their $d$-fold bar construction is
  equivalent to the $d$-fold suspension of the labelling space.
  
  In this paper, we study a variation of these spaces, namely configuration
  spaces of labelled \emph{clusters} of particles. These
  configuration spaces are again $E_d$-algebras, and we give geometric models
  for their iterated bar construction in two different ways: one establishes a
  description of these configuration spaces of clusters as \emph{cellular}
  $E_1$-algebras, and the other one uses an additional \emph{verticality}
  constraint. In the last section, we apply these results in order to
  calculate the stable homology of certain \emph{vertical} configuration spaces.
\end{abstract}

\section{Introduction and overview}
\label{sec:introduction}
Let us start with the classical definition of configuration spaces: for a space
$E$ and a natural number $r\ge 0$, the \emph{ordered configuration space of $r$
  particles in $E$} is defined to be
\[\tilde{C}_r(E)\coloneqq \set{\pa{z_1,\dotsc,z_r}\in E^r;\,z_i\ne z_j\text{ for }i\ne j}.\]
The $r$\textsuperscript{th} symmetric group $\frS_r$ acts freely on $\tC_r(E)$
by permuting coordinates, and we call the quotient
$C_r(E)\coloneqq \tC_r(E)/\frS_r$ the \emph{unordered configuration space of $r$
  particles in $E$}.

For a based space $X$, we define the \emph{labelled configuration space}
$C(E;X)$ as the union of all $\tC_r(E)\times_{\frS_r} X^r$, quotiented by the
relation that identifies $[z_1,\dotsc,z_r;x_1,\dotsc,x_r]$ with
$[z_1,\dotsc,\widehat{z_i},\dotsc,z_r;x_1,\dotsc,\widehat{x_i},\dotsc,x_r]$ if
$x_i$ is the basepoint of $X$. Visually, each particle $z_i$
carries a label $x_i\in X$, and if the label reaches the basepoint, then this
particle vanishes.

For the case $E=\R^d$ with $d\ge 1$, the labelled configuration
space $C(\R^d;X)$ is an $E_d$-algebra, more precisely: it admits an action of
the little $d$-cubes operad $\scC_d$ by inserting configurations into boxes
\cite{May-1972}. It is even equivalent to the \emph{free} $E_d$-algebra over
$X$, and its $d$-fold bar construction is equivalent to the $d$-fold
suspension $\Sigma^d X$, see \cite{Segal-1973}.

\noindent This paper studies variations of these labelled configuration spaces,
which additionally carry the information that some of the particles ‘belong
together’—that is: they form a \emph{cluster}—, and investigates their structure
as $E_d$-algebras.

\begin{quote}
  \textbf{Definition.} Let $E$ be a space and let $k\ge 1$ be an integer. A
  \emph{$k$-cluster} is an ordered configuration of $k$ distinct particles in
  $E$. For each integer $r\ge 0$, let $\tC_{\smash{r\times k}}(E)$ be the space
  of $r$ disjoint $k$-clusters. Then $\frS_k\wr\frS_r$ acts on
  $\tC_{\smash{r\times k}}(E)$ by permuting particles within the same cluster,
  and by permuting clusters. The quotient $C_{\smash{r\times k}}(E)$ is called
  the \emph{configuration space of $r$ $k$-clusters in $E$}.\looseness-1
\end{quote}

\noindent Intuitively, $C_{\smash{r\times k}}(E)$ parametrises unordered
collections of pairwise disjoint subsets of $E$, all of cardinality $k$, see the
first case of \tref{Figure}{fig:manyV}: it is a covering space of
$C_{\smash{r\cdot k}}(E)$.  The above definition has a labelled
counterpart (see \tref{Definition}{def:labClustConf} for details):\looseness-1

\begin{quote}
  \textbf{Definition.} For a well-based space $X$, we define the
  configuration space $C^k(E;X)$ of unordered $k$-clusters in $E$; each cluster
  carries a label inside $X$, and if the label reaches the basepoint, then the
  entire cluster vanishes.\looseness-1
\end{quote}

\begin{figure}[t]
  \centering
  \begin{tikzpicture}[scale=2.4]
  \draw[very thin, dblue!40] (.3,0) -- (.3,1);
  \draw[very thin, dblue!40] (.7,0) -- (.7,1);
  \draw (-1.5,0) rectangle (-.5,1);
  \node at (-1,-.16) {\scriptsize $C_{3\times 2}(\R^{2})$};
  \draw (0,0) rectangle (1,1);
  \draw[black!35,thick] (.3,.2) -- (.25,.2) -- (.25,.7) -- (.3,.7);
  \draw[black!35,thick] (.3,.4) -- (.35,.4) -- (.35,.8) -- (.3,.8);
  \draw[black!35,thick] (.7,.3) -- (.7,.6);
  \fill (.3,.2) circle (.02);
  \fill (.3,.4) circle (.02);
  \fill (.3,.7) circle (.02);
  \fill (.3,.8) circle (.02);
  \fill (.7,.3) circle (.02);
  \fill (.7,.6) circle (.02);
  \node at (.5,-.16) {\scriptsize $C_{3\times 2}(\R^{1,1})$};
  \draw[black!35,thick] (-1.3,.4) -- (-1.2,.2);
  \draw[black!35,thick] (-1.3,.7) -- (-.8,.6);
  \draw[black!35,thick] (-1,.8) -- (-.7,.3);
  \fill (-1.2,.2) circle (.02);
  \fill (-1.3,.4) circle (.02);
  \fill (-1.3,.7) circle (.02);
  \fill (-1,.8) circle (.02);
  \fill (-.7,.3) circle (.02);
  \fill (-.8,.6) circle (.02);
  \draw[black!30] (1.75,1) -- (1.75,.28) -- (2.7,.28);
  \draw (1.75,1) -- (2.7,1) -- (2.7,.28);
  \draw[thick, white] (1.9,.1) -- (1.9,.842);
  \draw[thick, white] (2.3,.2) -- (2.3,.938);
  \draw[very thin, dblue!40] (1.9,.1) -- (1.9,.842);
  \draw[very thin, dblue!40] (2.3,.2) -- (2.3,.938);
  \draw[black!35,thick] (1.9,.4) -- (1.9,.6);
  \draw[black!35,thick] (2.3,.5) -- (2.34,.5) -- (2.34,.85) -- (2.3,.85);
  \draw[black!35,thick] (2.3,.4) -- (2.26,.4) -- (2.26,.65) -- (2.3,.65);
  \fill (1.9,.4) circle (.02);
  \fill (1.9,.6) circle (.02);
  \fill (2.3,.5) circle (.02);
  \fill (2.3,.85) circle (.02);
  \fill (2.3,.4) circle (.02);
  \fill (2.3,.65) circle (.02);
  \draw[white,line width=.4mm] (1.5,0) rectangle (2.5,.75);
  \draw (1.5,0) rectangle (2.5,.75);
  \draw[black!30] (1.5,0) -- (1.75,.28);
  \draw (2.5,0) -- (2.7,.28);
  \draw (2.5,.75) -- (2.7,1);
  \draw (1.5,.75) -- (1.75,1);
  \node at (2,-.16) {\scriptsize $C_{3\times 2}(\R^{2,1})$};
  \draw[shift={(1.5,0)},black!30] (1.75,1) -- (1.75,.28) -- (2.7,.28);
  \draw[shift={(1.5,0)},black!30] (1.5,0) -- (1.75,.28);
  \draw[shift={(1.5,0)},white,line width=.4mm] (1.5,0) rectangle (2.5,.75);
  \draw[very thin, dblue!60,fill=dblue!15,shift={(1.5,0)},opacity=.7] (1.8,0) --
  (2.035,.28) -- (2.035,1) -- (1.8,.75) -- (1.8,0);
  \draw[very thin, dblue!60,fill=dblue!15,shift={(1.5,0)},opacity=.7] (2.2,0) --
  (2.415,.28) -- (2.415,1) -- (2.2,.75) -- (2.2,0);
  \draw[shift={(1.5,0)},black!35,thick] (1.85,.2) -- (1.96,.4);
  \draw[shift={(1.5,0)},black!35,thick] (1.9,.6) -- (1.98,.85);
  \draw[shift={(1.5,0)},black!35,thick] (2.25,.4) -- (2.3,.6);
  \draw[shift={(1.5,0)}] (1.75,1) -- (2.7,1) -- (2.7,.28);
  \draw[shift={(1.5,0)},line width=.4mm,dblue!10.5] (1.85,.75) -- (2,.75);
  \draw[shift={(1.5,0)}] (1.5,0) rectangle (2.5,.75);
  \draw[shift={(1.5,0)}] (2.5,0) -- (2.7,.28);
  \draw[shift={(1.5,0)}] (2.5,.75) -- (2.7,1);
  \draw[shift={(1.5,0)}] (1.5,.75) -- (1.75,1);
  \fill[shift={(1.5,0)}] (1.85,.2) circle (.02);
  \fill[shift={(1.5,0)}] (1.96,.4) circle (.02);
  \fill[shift={(1.5,0)}] (1.98,.85) circle (.02);
  \fill[shift={(1.5,0)}] (1.9,.6) circle (.02);
  \fill[shift={(1.5,0)}] (2.25,.4) circle (.02);
  \fill[shift={(1.5,0)}] (2.3,.6) circle (.02);
  \node at (3.5,-.16) {\scriptsize $C_{3\times 2}(\R^{1,2})$};
\end{tikzpicture}
  \caption{Several configuration spaces of three $2$-clusters inside $\R^2$ and
    $\R^3$.}\label{fig:manyV}
\end{figure}

\noindent Both previous definitions can be given in slightly higher
generality, by allowing configuration of clusters with different sizes
and balancing the internal ordering of a cluster with a given symmetric
action on the labelling space: this is done in §\,\ref{sec:preliminaries}.

As one can easily see, the configuration space $C^k(\R^d;X)$ again admits the
structure of an $E_d$-algebra by inserting configurations of clusters into
boxes. One of our goals is to give a geometric interpretation of the $d$-fold
bar construction of $C^k(\R^d;X)$. While this seems to be hard in general, we
can give an answer in the case $d=1$: In §\,\ref{sec:cell}, we decompose
$C^k(\R;X)$ into ‘free components’, i.e.\ we give an $E_1$-cellular
decomposition in the sense of
\cite{Galatius-Kupers-RW-2018,Galatius-Kupers-RW-2019,Klang-Kupers-Miller}.  For
this purpose, we define what it means for a collection of clusters to be
\emph{entangled}. This gives rise to an $E_1$-filtration
$\scF_\bullet C^k(\R;X)$ such that each $\scF_w C^k(\R;X)$ arises from
$\scF_{w-1}C^k(\R;X)$ by attaching free $E_1$-algebras. Using that the bar
construction turns $E_1$-cell attachments into usual cell attachments, we
show:\looseness-1

\begin{quote}
  \begin{qThm}\label{thm:a}There is a weak
  equivalence $\smash{BC^k(\R;X)\simeq \Sigma \bigvee_{e} X^{\wedge \# e}}$,
  where $e$ ranges in a set of ‘entanglement types’
  (\tref{Definition}{def:entType}) and has a weight $\# e\ge 1$.
  \end{qThm}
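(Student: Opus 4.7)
The plan is to execute the program sketched in the introduction: decompose $C^k(\R;X)$ as a cellular $E_1$-algebra indexed by entanglement types, and then transport the decomposition across the bar construction. First I would pin down the notion of an \emph{entanglement type}. Given a configuration $\xi\in C^k(\R;X)$, project each cluster to its convex hull in $\R$ and declare two clusters directly entangled if these intervals overlap; taking the transitive closure partitions the clusters of $\xi$ into \emph{entanglement components}. The \emph{type} $e$ of a component is the order-isomorphism class of the resulting interleaving of its $k\cdot\#e$ particles on $\R$, and its weight $\#e\ge 1$ is the number of clusters involved. A configuration decomposes canonically into its entanglement components.

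Next I would filter $C^k(\R;X)$ by letting $\scF_w C^k(\R;X)$ be the sub-$E_1$-algebra generated by configurations all of whose entanglement components have weight at most $w$. The essential geometric content is a recognition principle: for each type $e$ of weight $w$, the moduli space $M_e$ of patterns of type $e$ modulo orientation-preserving affine rescaling of $\R$ is an open cell, with a canonical compactification $\overline{M}_e$ into a closed disk whose boundary parametrises degenerations in which the component splits into strictly smaller entanglement components. Consequently, $\scF_w$ arises from $\scF_{w-1}$ by a free $E_1$-cell attachment along
\[
\bigvee_{\#e=w}\partial\overline{M}_e\wedge X^{\wedge w}\longrightarrow \bigvee_{\#e=w}\overline{M}_e\wedge X^{\wedge w},
\]
with the $\frS_k\wr\frS_w$-equivariance absorbed into the smash factor $X^{\wedge w}$ via the symmetric action on the labelling space introduced in §\,\ref{sec:preliminaries}. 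Verifying that $\overline{M}_e$ is genuinely a disk, and that the attaching data along $\partial\overline{M}_e$ matches the previously constructed layer $\scF_{w-1}$ inside the ambient $E_1$-algebra, is the step I expect to require the most care.

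Once the cellular structure is established, the conclusion follows formally. The bar construction sends free $E_1$-cell attachments to ordinary cell attachments and satisfies $B(\mathrm{Free}_{E_1}(Y))\simeq\Sigma Y$, see \cite{Galatius-Kupers-RW-2018}; applied inductively, it endows $BC^k(\R;X)$ with a CW-structure having one cell $\Sigma X^{\wedge\#e}$ per entanglement type $e$. The attaching map for the cell corresponding to $e$ lies in $B\scF_{\#e-1}$ and is assembled from configurations whose entanglement components have strictly smaller weight, so by induction it factors through an already-constructed wedge summand and is therefore null-homotopic after splitting. Summing the resulting wedge decompositions yields the claimed equivalence $BC^k(\R;X)\simeq\Sigma\bigvee_e X^{\wedge\#e}$.
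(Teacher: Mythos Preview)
Your overall architecture—filter by maximal entanglement weight, identify each layer as a free $E_1$-cell attachment, then transport through $B$—matches the paper, but your identification of the cell pair is wrong in a way that propagates to the end.

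The entanglement type of a configuration in $\tC_K(\R)$ is \emph{locally constant}: it is determined by the total linear order of the $k\cdot w$ particles on $\R$, and distinct particles on a line cannot pass one another without colliding. Hence the space $M_e$ of patterns of a fixed type $e$ is a contractible open cell with no meaningful boundary inside the configuration space; there is no locus ``$\partial\overline M_e$'' along which an entanglement component geometrically splits into smaller ones. The only way a point of $\scF_w\setminus\scF_{w-1}$ approaches $\scF_{w-1}$ is for a \emph{label} to reach the basepoint of $X$, which deletes a cluster. The correct cell pair for a type $e$ of weight $w$ is therefore $\bigl(X^{\times w},\,X^{\Delta w}\bigr)$, where $X^{\Delta w}\subset X^{\times w}$ is the fat wedge of tuples with at least one basepoint coordinate, and the attaching map $X^{\Delta w}\to U\scF_{w-1}$ records the residual configuration after the vanishing cluster is removed. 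Your pair $\bigl(\overline M_e\wedge X^{\wedge w},\,\partial\overline M_e\wedge X^{\wedge w}\bigr)$ both introduces a spurious moduli disc and collapses the fat wedge prematurely; were it the right pair, applying $B$ would produce a summand $\Sigma^{1+\dim M_e}X^{\wedge w}$ rather than $\Sigma X^{\wedge w}$.

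The final step then needs a different mechanism than the one you describe. After applying $B$ one obtains the cofibration $\Sigma X^{\Delta w}\hookrightarrow\Sigma X^{\times w}$, and the point is the elementary fact that the cofibre sequence $X^{\Delta w}\to X^{\times w}\to X^{\wedge w}$ splits after a single suspension, so that this map is equivalent to the wedge inclusion $\Sigma X^{\Delta w}\hookrightarrow\Sigma X^{\Delta w}\vee\Sigma X^{\wedge w}$ and the pushout simply adds the summand $\Sigma X^{\wedge w}$. This is a statement about suspensions of products, not an inductive argument that the attaching map ``factors through a previously constructed wedge summand''.
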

\end{quote}
This result is surprising for two reasons: first, it shows that the attaching
maps for vanishing clusters simplify drastically when applying the bar
construction, and second, it implies that if $X$ path-connected, then
$C^k(\R;X)$ is equivalent to a free $E_1$-algebra.\looseness-1

\enlargethispage{\baselineskip}

In addition to that, we can give a partial answer for the cases $d\ge 2$. To
this aim, we introduce a slightly more general family of configuration spaces of
clusters (see also \tref{Definition}{def:vert}):

\begin{quote}
  \textbf{Definition.} For $0\le p\le d$, and $k$ and $r$ as above, we define
  the subspace
  $C_{\smash{r\times k}}(\R^{p\kern-.6px,\kern.6pxd-p})\subseteq
  C_{\smash{r\times k}}(\R^{d})$, called \emph{vertical configuration space},
  where particles within the same cluster share their first $p$ coordinates. For
  a based space $X$, we define the subspace
  \mbox{$C^k(\R^{p\kern-.6px,\kern.6pxd-p};X)\subseteq C^k(\R^{d};X)$} with the
  same constraint.
\end{quote}
\noindent Several of these configuration spaces are depicted in
\tref{Figure}{fig:manyV}. In the case $p=d-1$, we require that all particles of
the same cluster lie on a common vertical line; hence the terminology.  It is
not hard to see that
$C^k(\R^{p\kern-.4px,\kern.4pxd-p};X)\subseteq C^k(\R^{d};X)$ is an
$E_{d}$-subalgebra, and it turns out that the first $p$ delooping steps are
manageable by a straightforward adaption of the methods from Segal’s argument
\cite{Segal-1973}:

\begin{quote}
  \begin{qThm}\label{thm:b}
    $B^pC^k(\R^{p\kern-.4px,\kern.4pxd-p};X)\simeq C^k(\R^{d-p};\Sigma^p X)$
    as $E_{d-p}$-algebras.
  \end{qThm}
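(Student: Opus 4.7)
The plan is to adapt Segal's recognition principle \cite{Segal-1973}: construct a scanning map
\[
\phi\colon C^k(\R^{p,d-p};X)\longrightarrow\Omega^p C^k(\R^{d-p};\Sigma^p X),
\]
show it is a morphism of $E_{d-p}$-algebras, and verify it is a group completion. The desired equivalence of $E_{d-p}$-algebras then follows from the recognition principle (the $B^p\dashv\Omega^p$ adjunction).

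The scanning map is defined as follows. A configuration $c\in C^k(\R^{p,d-p};X)$ decomposes as a finite family of clusters, each with a shared $\R^p$-position $y_i\in\R^p$, an internal $\R^{d-p}$-configuration $c_i$, and a cluster label $x_i\in X$. Using a compactifying map $\kappa\colon\R^p\to S^p$ (obtained from a homeomorphism $\R^p\cong\on{int}(D^p)$ and passage to $S^p=D^p/\partial D^p$, with a window scale chosen so that the $y_i$ lie in pairwise disjoint collars in the $c$-dependent version), one sends $s\in\R^p$ to the configuration in $C^k(\R^{d-p};\Sigma^p X)$ consisting of the cluster shapes $c_i$, each labelled by $\kappa(s-y_i)\wedge x_i\in S^p\wedge X=\Sigma^p X$. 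Clusters whose displacement $s-y_i$ lies outside the collar have label at the basepoint of $\Sigma^p X$ and hence vanish. At $s=\infty$ all clusters vanish, so $\phi(c)$ is a based loop. Because $\phi$ only manipulates the first $p$ coordinates, it commutes with insertions of little $(d-p)$-cubes in the last $d-p$ coordinates, so $\phi$ is a map of $E_{d-p}$-algebras.

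The heart of the proof is to show that $\phi$ is a group completion, which proceeds along the lines of Segal's original quasifibration argument. Concretely, one considers, for an increasing sequence of boxes $B_n\subseteq\R^p$, the ``restriction to $B_n$'' maps on both sides and identifies $\phi$ as the limit of a compatible system of maps fitting into quasifibrations whose base and fibre are cluster configuration spaces on sub-boxes of $\R^p$, together with an induction on the dimension of the base. The main obstacle is to verify the quasifibration condition: one must show that ``forget the clusters lying in a half-space'' defines a quasifibration on the target $C^k(\R^{d-p};\Sigma^p X)$, with fibre the corresponding configuration space inside. The cluster structure here enters as additional combinatorial bookkeeping---each cluster carries an internal $\R^{d-p}$-shape in addition to its label---but since $\phi$ preserves the internal shapes and the $E_{d-p}$-multiplication, Segal's quasifibration verifications from \cite{Segal-1973} adapt with essentially notational changes. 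This mechanical translation is what the paper refers to as the ``straightforward adaption'', and it delivers the equivalence $B^pC^k(\R^{p,d-p};X)\simeq C^k(\R^{d-p};\Sigma^p X)$ of $E_{d-p}$-algebras.
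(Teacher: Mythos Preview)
Your proposal takes a genuinely different route from the paper. The paper does \emph{not} use a scanning map or quasifibrations; it follows Segal's 1973 argument directly at the level of bar constructions. Concretely, the paper Moore-rectifies $C^k(\R^{p,d-p};X)$ to an honest topological monoid $C'$, proves a single-step equivalence $BC'\simeq C^k(\R^{p-1,d-p};\Sigma X)$ of $E_{d-1}$-algebras, and then inducts. The single step is carried out exactly as in \cite{Segal-1973}: one introduces the partial abelian monoid $D$ whose underlying space is $C^k(\R^{p-1,d-p};X)$ with ``disjoint union'' as partial sum, identifies $BD\cong C^k(\R^{p-1,d-p};\Sigma X)$ by hand, compares $C'$ and $D$ through the intermediate partial monoid $C''$ of \emph{projectable} configurations, and shows both comparison maps are equivalences on classifying spaces using Segal's technology of ordered spaces (the analogue of \cite[Prop.~2.7]{Segal-1973}). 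No group-completion theorem and no recognition principle enter; the bar construction is computed outright.

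Your strategy of constructing a scanning map $\phi\colon C^k(\R^{p,d-p};X)\to\Omega^pC^k(\R^{d-p};\Sigma^pX)$ and showing it is a group completion is a legitimate alternative in spirit (closer to the May/McDuff approach than to \cite{Segal-1973}, which contains no quasifibrations), but as written it has two concrete problems. First, your description of the key quasifibration is misplaced: ``forget the clusters lying in a half-space'' cannot be a map on the target $C^k(\R^{d-p};\Sigma^pX)$, which carries no $\R^p$-direction; the restriction/forget maps in a scanning argument live on intermediate configuration spaces over sub-boxes of $\R^p$, and the quasifibration condition concerns those. Second, even granting that $\phi$ is a group completion, the passage to $B^pC^k(\R^{p,d-p};X)\simeq C^k(\R^{d-p};\Sigma^pX)$ via the $B^p\dashv\Omega^p$ adjunction requires the target to be $(p-1)$-connected (so that the counit $B^p\Omega^pY\to Y$ is an equivalence); this is true here because $\Sigma^pX$ is $(p-1)$-connected, but you neither state nor verify it, and one also has to check that the resulting equivalence respects the residual $E_{d-p}$-structure. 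The paper's inductive bar-construction argument sidesteps both issues: it produces the $E_{d-p}$-equivalence one delooping at a time, without ever appealing to connectivity of the target or to a group-completion statement.
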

\end{quote}
Informally, this means that the first $p$ delooping steps ‘resolve’ the
verticality constraint. This is perhaps not so surprising: in the
$E_{d}$-algebra $C^k(\R^{p\kern-.4px,\kern.4pxd-p};X)$, clusters play the rôle
of particles in the classical labelled configuration space, and from the
perspective of the first $p$ coordinates, they also behave as such.  Theorems
\ref{thm:a} and \ref{thm:b} are special cases of Theorems \ref{thm:BCRX} and
\ref{thm:segal}, respectively: those also cover the case of configuration spaces
of clusters with different sizes and balanced labels.

Combining Theorems \ref{thm:a} and \ref{thm:b}, we obtain a model for the
iterated bar construction of the $E_{p+1}$-algebra
$C^k(\R^{p\kern-.3px,\kern.2px1};X)$. This can be used to calculate the
\emph{stable} homology of these spaces: it is shown in \cite{Bianchi-Kranhold}
that adding a new cluster
\mbox{$C_{\smash{r\times k}}(\R^{p\kern-.3px,\kern.2px1})\to
  C_{\smash{(r+1)\times k}}(\R^{p\kern-.3px,\kern.2px1})$} is homologically
stable for \mbox{$p\ge 1$}. We determine the stable homology
$H_\bullet(C_{\smash{\infty\times k}}(\R^{p\kern-.3px,\kern.2px1}))$ as follows:
there is a distinguished entanglement type $e_0$ (see
\tref{Definition}{def:entType}) corresponding to a \emph{single} $k$-cluster. To
each finitely supported family $\lambda=(\lambda_e)_{e\ne e_0}$ of integers
$\lambda_e\ge 0$, where $e$ ranges in the set of all entanglement types of
$k$-clusters, we assign a shifting parameter $s(\lambda)$ and a graded module
$M_\bullet(\R^{p+1};\lambda[\infty])$, which is the (twisted) stable homology of
a sequence of certain \emph{coloured} configuration spaces \cite{Palmer-2018},
the stabilisation step given by adding particles of colour $e_0$. We then show
the following:\looseness-1

\begin{quote}
  \textbf{Theorem~\ref{thm:stableH}.}\hspace*{5px}\itshape
  For each $p,k\ge 1$, we have an
  isomorphism of graded modules\vspace*{-3px} 
  \[\textstyle H_\bullet(C_{\smash{\infty\times k}}(\R^{p\kern-.3px,\kern.2px1}))\cong
    \bigoplus_{\lambda}
    M_{\bullet-p\cdot s(\lambda)}(\R^{p+1};\lambda[\infty]).\]
\end{quote}
\noindent The corresponding \emph{unstable} modules
$M_\bullet(\R^{p+1};\lambda[n])$ already appeared in \cite{Bianchi-Kranhold} to
describe the homology of certain filtration quotients of
$C_{\smash{r\times k}}(\R^{\smash{p\kern-.3px,\kern.2px1}})$; however, it
remained open if the associated spectral sequence collapses on its first page
and if the extension problem is trivial. Theorem~\ref{thm:stableH} tells us that
this is at least stably the case.\looseness-1

\paragraph{Related work}
Configuration spaces of clusters have been studied from the perspective of
homological stability \cite{Tran,Palmer-2021} and in relation to Hurwitz
spaces \cite{Tietz}. Moreover, the ‘clustering’ of particles is useful to
describe an enhancement of the little $d$-cubes operad $\scC_d$ that acts on
moduli spaces of manifolds with \emph{multiple} boundary components; this is a
leading principle in the author’s PhD thesis \cite{Kranhold-2022}.

Vertical configuration spaces, especially their higher homotopy groups and their
homological stability, have been studied in
\cite{Herberz,Roesner,Latifi,Bianchi-Kranhold}. They are also closely related to
\emph{fibrewise} configuration spaces, which appear in \cite{Cnossen} in order
to formulate an approximation theorem for configurations with twisted labels and
labels in partial abelian monoids. Moreover, these spaces assemble into a
coloured operad $\V_{p\kern-.6px,\kern.6pxd-p}$, which is similar to the
extended Swiss cheese operad \cite{Willwacher} and acts on moduli spaces of
$d$-dimensional manifolds with $p$-dimensional foliations, see
\cite{CFB-1990b,Kranhold-2022} for the case of surfaces with a $1$-dimensional
foliation.

\paragraph{Outlook}%
We still do not know what the iterated bar construction of the full
$E_d$-algebra $C^k(\R^d;X)$ looks like for $d\ge 2$. One might try to enhance
the $E_d$-cellular methods for the case $d=1$ to the general case; however, we
lack a good notion of higher-dimensional entanglement types. On the other hand,
it would already be interesting to know if the $E_d$-algebra $C^k(\R^d;X)$ is
equivalent to a free $E_d$-algebra if $X$ is path-connected.\looseness-1

\paragraph{Acknowledgements}%
I would like to thank my PhD supervisor, Carl-Friedrich Bödig\-heimer, for
suggesting the study of vertical configuration spaces and his valuable
advice. The article benefited from discussions with Andrea Bianchi about various
delooping methods, from discussions with Bastiaan Cnossen about several indexing
categories. Finally, I thank the anonymous referee for many useful
comments.\looseness-1

\paragraph{Funding}%
The author was supported by the \emph{Max Planck Institute for Mathematics} in
Bonn, by the \emph{Deutsche Forschungsgemeinschaft} (\textsc{dfg}, German
Research Foundation) under Germany’s Excellence Strategy (\textsc{exc}-2047/1,
390685813) and by the \emph{Promotionsförderung} of the
\emph{Studienstiftung des Deutschen Volkes}.

\section{Basic constructions}
\label{sec:preliminaries}
In this section, we formally introduce the aforementioned configuration spaces
of (labelled) clusters and make their $E_d$-algebra structure explicit.  As
already mentioned in the introduction, this can be done without much further
effort in slightly higher generality, by allowing configurations of clusters
with different sizes at the same time.

\begin{defi}\label{def:clustConf}
  Let $E$ be a space and $K=(k_1,\dotsc,k_r)$ be a tuple of integers $k_i\ge 1$.
  We start with a reindexing and let $\tC_K(E)\coloneqq \tC_{k_1+\dotsb+k_r}(E)$,
  but we denote its elements by tuples $(\vec z_1,\dotsc,\vec z_r)$,
  where $\vec z_i=(z_{i,1},\dotsc,z_{i,k_i})$, and we call $\vec z_i$ a
  \emph{cluster of size $k_i$}.

  If we denote by $r(k)\ge 0$ the number of occurrences of $k$ in the tuple $K$,
  then the group $\frS_K \coloneqq \prod_{k\ge 1}\frS_k\wr\frS_{r(k)}$ acts on
  $\tC_K(E)$ by permuting clusters of the same size and by permuting the
  internal ordering of each cluster. We call the quotient
  $C_K(E)\coloneqq \tC_K(E)/\frS_K$ the \emph{configuration space of clusters in
    $E$} and denote elements in $C_K(E)$ as (unordered) sums
  $\sum_{i=1}^r [\vec z_i]$ of unordered clusters
  $[\vec z_i]= [z_{i,1},\dotsc,z_{i,k_i}]
  =\{z_{i,1},\dotsc,z_{i,k_i}\}$.\looseness-1
\end{defi}

\begin{expl}
  Let $k\ge 1$ and $r\ge 0$ be integers. If $r\times k\coloneqq (k,\dotsc,k)$
  denotes the tuple of length $r$, then $C_{\smash{r\times k}}(E)$ is exactly the
  configuration space from the introduction.
\end{expl}

Labelled configuration spaces of clusters should generalise the classical notion
of a labelled configuration space in the following way: we want to assign to
each $k$-cluster a label inside a based space $X_k$ and balance the internal
ordering of each cluster with a given symmetric action on $X_k$.  In order to
make this definition precise, we first have to introduce an indexing category,
which is a special case of the Grothendieck construction and generalises the
notion of a wreath product $G\wr\frS_r=G^r\rtimes \frS_r$.
  
\begin{defi}
  Let $\Inj$ be the small category with objects $\ul{r}\coloneqq \{1,\dotsc,r\}$
  for all non-negative integers $r\in\{0,1,2,\dotsc\}$, and with morphisms
  $\ul{r}\to \ul{r}'$ being all injective maps of finite sets.  Then $\Inj$ is
  spanned by two sorts of maps: on the one hand \emph{permutations}
  $\tau\in\frS_r$, and on the other hand, the \emph{top cofaces}
  $d^r\colon \ul{r-1}\to \ul{r}$, where for each $1\le i\le r$, we denote by
  $d^i$ the unique strictly monotone function whose image does not contain the
  element $i\in \ul{r}$. Whenever we apply a contravariant functor to $\Inj$, we
  write $d_i\coloneqq (d^i)^*$.
\end{defi}

\begin{nota}[Tuples]\label{nota:tuples}
  Let $K=(k_1,\dotsc,k_r)$ be a tuple of integers $k_i\ge 1$.
  \begin{enumerate}[joinedup,packed]
  \item We denote by
    $|K|\coloneqq k_1+\dotsb+k_r$ the \emph{size} and by $\# K\coloneqq r$ the
    \emph{length} of $K$.
  \item For a sequence $\bmX\coloneqq (X_k)_{k\ge 1}$ in a complete
    category, we let $\bmX^K\coloneqq X_{k_1}\times\dotsb\times X_{k_r}$.
  \item For any injection $u\colon \ul{t}\hookrightarrow \ul{r}$ we define the \emph{pullback}
    $u^*K\coloneqq (k_{\smash{u(1)}},\dotsc,k_{\smash{u(t)}})$.
  \item $\frS_r$ acts on the set of $r$-tuples by pullback and we denote the orbit of $K$ by $[K]$.
  \end{enumerate}
\end{nota}

\begin{defi}[Wreath products]
  Let $\frG=(\frG_k)_{k\ge 1}$ be a sequence of discrete groups. We define
  the \emph{wreath product} $\frG\wr\Inj$ as the following small category:
  \begin{enumerate}[joinedup,packed]
  \item objects are tuples $K=(k_1,\dotsc,k_r)$ with
    $r\ge 0$ and $k_i\ge 1$ an integer;
  \item morphisms $K\to L$ are pairs $(u,\bm{g})$, where $\bm{g}\in \frG^K$
    and $u\colon \ul{r}\hookrightarrow \ul{s}$ with $K=u^*L$.
  \item composition is given by
    $(v,\bm{h})\circ (u,\bm{g}) \coloneqq (v\circ u,u^*\bm{h}\cdot \bm{g})$.
  \end{enumerate} 
\end{defi}

\begin{constr}
  Let $\frG=(\frG_k)_{k\ge 1}$ be a sequence of discrete groups and
  $\bmX=(X_k)_{k\ge 1}$ be a \emph{based $\frG$-sequence}, i.e.\ a sequence of
  based spaces, together with basepoint-preserving actions of $\frG_k$ on
  $X_k$. Then we obtain a functor to the category of topological spaces
  \[\bmX^-\colon \frG\wr\Inj\longrightarrow \Top,\quad K\mapsto \bmX^K=X_{k_1}\times\dotsb\times X_{k_r}\]
  as follows: for each injective map $u\colon \ul{r}\hookrightarrow \ul{s}$,
  each fibre has at most one element and we put
  $u_*(x_1,\dotsc,x_r)\coloneqq (x_{\smash{u^{-1}(1)}},\dotsc,x_{\smash{u^{-1}(s)}})$, where we
  define $x_\emptyset$ to be the basepoint. Moreover, $\frG^K$ acts on $\bmX^K$
  component-wise and we put
  $(u,\bm{g})_*(\bm{x})\coloneqq u_*(\bm{g}\cdot \bm{x})$ for $\bm{x}\in\bmX^K$.
\end{constr}

\begin{defi}\label{def:labClustConf}
  Consider the sequence $\frS\coloneqq (\frS_k)_{k\ge 1}$ of symmetric groups.
  Then, for each space $E$, the family of spaces $\tC_K(E)$ constitutes a functor
  $(\frS\wr\Inj)^\op\longrightarrow \Top$ by permuting clusters of the same size, by
  permuting the internal ordering of each cluster, and by declaring that for
  each $1\le i\le r$, the face map $d_i\colon \tC_K(E)\to \tC_{d_iK}(E)$ forgets
  the $i$\textsuperscript{th} cluster. If we are additionally given a based
  symmetric sequence $\bmX=(X_k)_{k\ge 1}$, then we define the
  \emph{configuration space of labelled clusters} $C(E;\bmX)$ to be the coend
  \begin{align*}
    C(E;\bm{X}) &\coloneqq \int^{K\in \frS\swr\Inj} \tC_K(E)\times \bm{X}^K
    \\ &= \on{coeq}
    \mathopen{}\bigg(\hspace*{-3px}
    \begin{tikzcd}[column sep=2.5em,ampersand replacement=\&]
      \displaystyle\coprod_{K,L}\tC_{L}(E)\times (\frS\wr\Inj)\binom{K}{L}\times \bmX^{K}
      \ar[shift left=1,-to]{r}{\alpha}\ar[shift right=1,-to]{r}[swap]{\beta} \&
      \displaystyle\coprod_K \tC_K(E)\times \bmX^{K}
    \end{tikzcd}\hspace*{-4px}\bigg)\mathclose{},
  \end{align*}
  where $(\frS\wr\Inj)\binom{K}{L}$ is the set of morphisms $w\colon K\to L$ and where
  $\alpha(\bm{z},w,\bm{x})=(w^*\bm{z},\bm{x})$ and $\beta(\bm{z},w,\bm{x})=(\bm{z},w_*\bm{x})$.
  Each tuple $(\vec z_1,\dotsc,\vec z_r,x_1,\dotsc,x_r)$ in $\tC_K(E)\times \bm{X}^K$
  represents a configuration $\sum_i\vec z_i\otimes x_i$ in $C(E;\bmX)$, where
  $\sigma^*\vec z_i\otimes x_i = \vec z_i\otimes \sigma_*x_i$ for
  each $\sigma\in\frS_{k_i}$.
\end{defi}

\begin{rem}\label{rem:labEffect}
  It is perhaps surprising how many different variations of these spaces
  can be produced by a suitable choice of the labelling sequence:
  \begin{enumerate}
  \item If all $X_k$ carry a trivial $\frS_k$-action, then $C(E;\bm{X})$
    contains unordered collections of labelled and internally unordered
    clusters. For example, let $\ul{\S}^0$ be the sequence with the $0$-sphere
    $\S^0$, together with trivial $\frS_k$-actions, in each degree. Then we
    have\looseness-1
    \[\textstyle C(E;\ul{\S}^0)\cong\coprod_{[K]} C_K(E).\]
  \item For $k\ge 1$ and a based space $X$ with a based $\frS_k$-action, let
    $X[k]\coloneqq (X_{l})_{l\ge 1}$ be the sequence with $X_k\coloneqq X$ and
    $X_l\coloneqq *$ for $l\ne k$. Then $C(E;X[k])$ contains only
    configurations where all clusters have size $k$. If the $\frS_k$-action is trivial,
    then $C(E;X[k])$ is exactly the space $C^k(E;X)$ from the
    introduction. In particular, we have\looseness-1
    \[\textstyle C(E;\S^0[k])=C^k(E;\S^0)\cong
      \coprod_{r\ge 0} C_{\smash{r\times k}}(E).\]    
  \item If we define $\frS_+$ to be the based symmetric sequence with
    $(\frS_+)_k\coloneqq \{*\}\sqcup\frS_k$, together with the left translation,
    then $C(E;\frS_+)$ contains \emph{unordered}
    collections of unlabelled, but internally \emph{ordered} clusters.
  \item For a based space $X$, let $X^\wedge$ be the based symmetric sequence with
    $(X^\wedge)_k\coloneqq X^{\wedge k}$, with $\frS_k$ acting by coordinate
    permutation. Then $C(E;X^\wedge)$ contains configurations of clusters where
    each \emph{particle} inside a cluster carries a label in $X$, and if
    \emph{one} of these labels reaches the basepoint, then the entire cluster vanishes. 
  \end{enumerate}
\end{rem}

We finish this section by formally defining the action of $\scC_d$ on $C(\R^d;\bmX)$ by inserting
configurations of labelled clusters into boxes as in \tref{Figure}{fig:EdonC}.

\begin{figure}[h]
  \centering
  \begin{tikzpicture}[scale=2.4]
  \node at (-.3,.5) {$\lambda_3$};
  \node at (-.1,.5) {$\left(\vbox to 1.5cm{}\right.$};
  \node at (4.1,.5) {$\left.\vbox to 1.5cm{}\right)$};
  \draw[thin,dgrey] (0,0) rectangle (1,1);
  \draw[thick] (.35,.1) rectangle (.55,.3);
  \draw[thick] (.5,.4) rectangle (.9,.8);
  \draw[thick] (.1,.6) rectangle (.4,.9);
  \node at (.7,.6) {\scriptsize $1$};
  \node at (.25,.75) {\scriptsize $3$};
  \node at (.45,.2) {\scriptsize $2$};
  \draw[thin,dgrey] (1.2,.1) rectangle (2,.9);
  \draw[thin,dgrey] (2.2,.1) rectangle (3,.9);
  \draw[thin,dgrey] (3.2,.1) rectangle (4,.9);
  \node at (4.31,0.5) {$=$};
  \draw[thin,dgrey] (4.5,0) rectangle (5.5,1);
  \draw[very thin,grey] (4.85,.1) rectangle (5.05,.3);
  \draw[very thin,grey] (5,.4) rectangle (5.4,.8);
  \draw[very thin,grey] (4.6,.6) rectangle (4.9,.9);
  \draw[black!40,thick] (1.55,.75) -- (1.5,.25);
  \draw[black!40,thick] (1.45,.55) -- (1.6,.4);
  \draw[black!40,thick,fill=black!10] (1.7,.6) -- (1.85,.4) -- (1.75,.3) -- (1.7,.6);
  \fill (1.55,.75) circle (.02);
  \fill (1.45,.55) circle (.02);
  \fill (1.6,.4) circle (.02);
  \fill (1.5,.25) circle (.02);
  \fill (1.8,.7) circle (.02);
  \fill (1.7,.6) circle (.02);
  \fill (1.85,.4) circle (.02);
  \fill (1.75,.3) circle (.02);
  \node at (1.1,.12) {,};
  \node at (2.1,.12) {,};
  \node at (3.1,.12) {,};
  \draw[black!40,thick] (3.5,.3) -- (3.5,.6);
  \fill (3.5,.6) circle (.02);
  \fill (3.5,.45) circle (.02);
  \fill (3.5,.3) circle (.02);
  \draw[black!40,thick] (5.175,.725) -- (5.15,.475);
  \draw[black!40,thick] (5.125,.625) -- (5.2,.55);
  \draw[black!40,thick,fill=black!10] (5.325,.55) -- (5.275,.5) -- (5.25,.65) -- (5.325,.55);
  \fill (5.175,.725) circle (.014);
  \fill (5.2,.55) circle (.014);
  \fill (5.125,.625) circle (.014);
  \fill (5.15,.475) circle (.014);
  \fill (5.3,.7) circle (.014);
  \fill (5.25,.65) circle (.014);
  \fill (5.325,.55) circle (.014);
  \fill (5.275,.5) circle (.014);
  \draw[black!40,thick] (4.7125,.675) -- (4.7125,.7875);
  \fill (4.7125,.7875) circle (.014);
  \fill (4.7125,.73125) circle (.014);
  \fill (4.7125,.675) circle (.014);
\end{tikzpicture}
  \caption{An instance of $\lambda_3\colon \scC_2(3)\times C(\R^2;\ulp{\S}^0)^3
    \to C(\R^2;\ulp{\S}^0)$}\label{fig:EdonC}
\end{figure}

\begin{constr}\label{constr:clustEd}
  Let $\bm{X}=(X_k)_{k\ge 1}$ be a symmetric sequence as before and let $d\ge 1$. Then
  $C(\R^{d};\bm{X})$ admits the structure of a $\smash{\scC_{d}}$-algebra:
  recall that operations in $\smash{\scC_{d}(s)}$ are tuples $(c_1,\dotsc,c_s)$
  of rectilinear embeddings
  $\smash{c_{\smash j}\colon [0;1]^{d}\hookrightarrow [0;1]^{d}}$ with pairwise disjoint
  image. If pick an iden\-tification $\R\cong (0;1)$, and thus, by coordinate-wise
  application, also $\R^{d}\cong (0;1)^{d}$, then our structure maps
  $\lambda_s\colon \scC_{d}(s)\times C(\R^d;\bm{X})^s\to C(\R^{d};\bm{X})$
  are given by\looseness-1
  \[\lambda_s\mathopen{}\bigg((c_1,\dotsc,c_s),
      \sum_{i=1}^{r_1}\vec z_{1,i}\otimes x_{1,i},
      \dotsc,
      \sum_{i=1}^{r_s}\vec z_{s,i}\otimes x_{s,i}
      \bigg)\mathclose{}
      \coloneqq \sum_{j=1}^s\sum_{i=1}^{\vphantom{J_g}r_{\kern-.4px\smash j}}
      c_j(\vec z_{j\kern-.3px,\kern.3pxi})\otimes x_{j\kern-.3px,\kern.3pxi}.\]
\end{constr}

\section{Cellular decompositions of clustered configuration spaces}
\label{sec:cell}
In this section, we study the homotopy type of the bar construction
$BC(\R;\bmX)$. After having introduced the necessary combinatorics, we start by
discussing the instructive example of $C(\R;\ul{\S}^0)$, and then establish a
strategy for the general case.

\begin{defi}
  For each integer $n\ge 0$, a \emph{partition} of $\{1,\dotsc,n\}$
  is a tuple \mbox{$\xi=(S_1,\dotsc,S_r)$} of non-empty subsets
  $S_i\subseteq \{1,\dotsc,n\}$ such that:\vspace*{4pt}
  \begin{enumerate}[packed,joinedup]
  \item the collection $\{S_1,\dotsc,S_r\}$ is a partition of
    $\{1,\dotsc,n\}$;\vspace*{1pt}
  \item the entries are ordered by their minimum,
    i.e.\ $\min(S_1)<\dotsb<\min(S_r)$.\vspace*{4pt}
  \end{enumerate}
  \noindent We write $\abs{\xi}\coloneqq n$ and
  $K(\xi)\coloneqq (\# S_1,\dotsc,\# S_r)$. Let $\Xi$ be the set of all
  partitions for all $n$.
\end{defi}

\begin{constr}\label{def:entType}
  We have a product $\Xi\times \Xi\to \Xi$ by \emph{stacking} partitions: more
  precisely, for two partitions
  $\xi=(S_{\smash 1}^{\vphantom\prime},\dotsc,S_{\smash r}^{\vphantom \prime})$
  and $\xi'=(S_{\smash 1}',\dotsc,S'_{\smash{r'}})$, we let
  \[\xi\sqcup \xi' \coloneqq \pa{S_{\smash 1}^{\vphantom\prime},\dotsc,S_{\smash
        r}^{\vphantom\prime},|\xi|+S_{\smash 1}',\dotsc, |\xi|+S'_{\smash{r'}}}.\]
  Thus, $\Xi$ becomes a monoid with neutral element the empty partition
  $\emptyset$. This monoid is free: we call a partition
  $e\in \Xi$ \emph{indecomposable}, or an \emph{entanglement type}, if it is neither
  empty nor the product of two non-empty partitions, and we denote\footnote{As a
    typographical mnemonic, $\Xi$ looks like a ‘decomposable’ version of $\bE$.}
  the subset of them by $\bE\subseteq \Xi$. Then the monoid $\Xi$ is freely
  generated by $\bE$.
  This generating set $\bE$ is graded: for an entanglement type $e=(S_1,\dotsc,S_w)$
  we let $\# e\coloneqq w$ be its \emph{weight}.
\end{constr}

\begin{expl}\label{ex:CRS0}
  We have a map $\chi\colon C(\R;\ul{\S}^0)\to \Xi$ of $E_1$-algebras given by
  identifying, for each $\sum_i [\vec z_i]\in C_K(\R)$, the set
  $\bigcup_i[\vec z_i]\subseteq \R$ with $\{1,\dotsc,|K|\}$ in a monotone way,
  and regard clusters as entries of the partition, see
  \tref{Figure}{fig:vr01s0}.
  
  This map admits a section $s$ by including $\{1,\dotsc,|K|\}$ into $\R$, and
  the composition $s\circ\chi$ is homotopic to the identity by linear
  interpolation. Thus, $\chi$ is an equivalence of $E_1$-algebras.  Since $\Xi$
  is a freely generated by $\bE$, we get
  $BC(\R;\ul{\S}^0)\simeq\textstyle\bigvee_{e\in\bE}\S^1$.
\end{expl}

\begin{tFigure}
  \centering
  \begin{tikzpicture}[scale=3.5]
  \draw[thin,dgrey] (.1,0) -- (1.1,0);
  \draw[dgrey,thick] (.2,0) -- (.2,.08) -- (.6,.08) -- (.6,0);
  \draw[dgrey,thick] (.6,.08) -- (.8,.08) -- (.8,0);
  \draw[dgrey,thick] (.25,0) -- (.25,.04) -- (.4,.04) -- (.4,0);
  \draw[dgrey,thick] (.35,0) -- (.35,-.06) -- (.9,-.06) -- (.9,0);
  \draw[dgrey,thick] (.94,0) -- (.94,.04) -- (1,.04) -- (1,0);
  \draw[black!20,thin] (.05,-.14) rectangle (1.15,.18);
  \draw[black!20,thin] (1.43,-.14) rectangle (2.48,.18);
  \node at (.2,0) {\tiny $\bullet$};
  \node at (.25,0) {\tiny $\bullet$};
  \node at (.35,0) {\tiny $\bullet$};
  \node at (.4,0) {\tiny $\bullet$};
  \node at (.7,0) {\tiny $\bullet$};
  \node at (.85,0) {\tiny $\bullet$};
  \node at (.7,0) {\tiny $\bullet$};
  \node at (.9,0) {\tiny $\bullet$};
  \node at (.94,0) {\tiny $\bullet$};
  \node at (1,0) {\tiny $\bullet$};
  \node at (1.29,0) {\small $\longmapsto$};
  \node[anchor=south] at (1.5,-.08) {\tiny $\strut 1$};
  \node[anchor=south] at (1.6,-.08) {\tiny $\strut 2$};
  \node[anchor=south] at (1.7,-.08) {\tiny $\strut 3$};
  \node[anchor=south] at (1.8,-.08) {\tiny $\strut 4$};
  \node[anchor=south] at (1.9,-.08) {\tiny $\strut 5$};
  \node[anchor=south] at (2.0,-.08) {\tiny $\strut 6$};
  \node[anchor=south] at (2.1,-.08) {\tiny $\strut 7$};
  \node[anchor=south] at (2.2,-.08) {\tiny $\strut 8$};
  \node[anchor=south] at (2.3,-.08) {\tiny $\strut 9$};
  \node[anchor=south] at (2.4,-.08) {\tiny $\strut 10$};
  \draw[dgrey,thick] (1.5,.04) -- (1.5,.12) -- (1.9,.12) -- (1.9,.04);
  \draw[dgrey,thick] (1.9,.12) -- (2.1,.12) -- (2.1,.04);
  \draw[dgrey,thick] (1.6,.04) -- (1.6,.08) -- (1.8,.08) -- (1.8,.04);
  \draw[dgrey,thick] (1.7,-.04) -- (1.7,-.08) -- (2.2,-.08) -- (2.2,-.04);
  \draw[dgrey,thick] (2.3,.04) -- (2.3,.08) -- (2.4,.08) -- (2.4,.04);
  \node[anchor=west] at (2.49,0) {\footnotesize $=\pa{\{1,5,7\},\{2,4\},\{3,8\},\{6\},\{9,10\}}$};
\end{tikzpicture}
  \caption{An instance of $\chi\colon C(\R;\ulp{\S}^0)\to \Xi$}\label{fig:vr01s0}
\end{tFigure}

In the case of general labelling sequences $\bmX=(X_k)_{k\ge 1}$ with
non-isolated basepoints, we have to deal with the phenomenon that new clusters
can suddenly arise or vanish when a label leaves or enters the basepoint,
respectively. In order to gain control ‘near’ the basepoint, we will have to
assume that each $X_k$ is \emph{well-based}, i.e.\ the basepoint inclusion
$*\hookrightarrow X_k$ is a cofibration in the Quillen model structure of spaces
(it is not necessary to consider cofibrations of $\frS_k$-spaces; see
Remark~\ref{rem:noSym} for a conceptual reason.)

\begin{theo}\label{thm:BCRX}
  Let $\bm{X}=(X_k)_{k\ge 1}$ be a sequence of well-based
  spaces (with arbitrary based $\frS_k$-actions on $X_k$). Then we
  have a weak equivalence, abbreviating
  $\bmX^{\wedge K}\coloneqq X_{k_1}\wedge \dotsb\wedge X_{k_r}$,\looseness-1
  \[\textstyle BC(\R;\bm{X})\simeq \Sigma\bigvee_{e\in
      \bE}\bm{X}^{\wedge K(e)}.\vspace*{2px}\]
\end{theo}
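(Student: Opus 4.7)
The plan is to build $C(\R;\bm{X})$ as a cellular $E_1$-algebra whose $E_1$-generating cells are indexed by entanglement types, and then transport that presentation through the monoidal bar construction, using that $B$ sends a free $E_1$-cell attachment $F_{E_1}(Y\hookrightarrow Z)$ to the ordinary cofibration $\Sigma Y\hookrightarrow\Sigma Z$. This is exactly the strategy of Example~\ref{ex:CRS0}, enhanced to take labels into account.

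Concretely, I would first generalise the entanglement map of Example~\ref{ex:CRS0} to a map $\chi\colon C(\R;\bm{X})\to\Xi$ of $E_1$-algebras by forgetting all labels. Since $\Xi$ is freely generated by $\bE$, every partition decomposes uniquely as $\xi=e_1\sqcup\dotsb\sqcup e_m$; setting $\#\xi\coloneqq \#e_1+\dotsb+\#e_m$ I define
\[\scF_w C(\R;\bm{X})\coloneqq \chi^{-1}\bpa{\set{\xi\in\Xi\mid \#\xi\le w}}.\]
For each entanglement type $e$ of weight $w$, the stratum of configurations with entanglement pattern exactly $e$ deformation-retracts onto a canonical $e$-shaped configuration by linear interpolation of particle positions, so it contributes a contractible cell carrying a label in $\bm{X}^{K(e)}$. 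After collapsing degenerate labels (where well-basedness of the $X_k$ ensures that the collapse is a cofibration) and observing that the blocks of a partition are canonically ordered by their minimum, so that no residual symmetric action has to be quotiented away, the resulting relative cell is exactly $\bm{X}^{\wedge K(e)}$.

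Next I would interpret the inclusion $\scF_{w-1}\hookrightarrow\scF_w$ as an $E_1$-cell attachment in the framework of Galatius–Kupers–Randal-Williams, attached along the basepoint of the free algebra $F_{E_1}\bpa{\bigvee_{\#e=w}\bm{X}^{\wedge K(e)}}$: the fresh entanglement types interact with the previously-attached ones only via the $E_1$-product, and this is exactly the freeness statement one needs. Applying $B$ to the entire filtration then yields successive pushouts glueing $\Sigma\bm{X}^{\wedge K(e)}$ trivially along the basepoint for each $e\in\bE$, and assembling the pieces gives
\[BC(\R;\bm{X})\simeq \Sigma\bigvee_{e\in\bE}\bm{X}^{\wedge K(e)}.\]

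The step I expect to be the main obstacle is verifying that the fresh stratum for a new entanglement type $e$ is really modelled by the reduced smash $\bm{X}^{\wedge K(e)}$ rather than some larger space that merely retracts onto it, particularly at the boundary where a label approaches the basepoint of some $X_k$ and a whole cluster disappears (potentially changing the entanglement pattern of the remaining configuration). Well-basedness is the tool that turns these boundary degenerations into cofibrations compatible with the entanglement stratification; once the cellular $E_1$-presentation is in place, the passage through the bar construction is formal.
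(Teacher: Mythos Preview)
Your overall strategy—filter $C(\R;\bm{X})$ by $E_1$-subalgebras built from entanglement types, recognise each inclusion as an $E_1$-cell attachment, then apply $B$—is the paper's strategy. But two of your steps are wrong as written.

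First, the filtration. There is no continuous label-forgetting map $\chi\colon C(\R;\bm{X})\to\Xi$: when a label reaches the basepoint the cluster vanishes and the partition jumps discontinuously. More importantly, your $\#\xi=\#e_1+\dotsb+\#e_m$ is simply the total number of parts of $\xi$, and $\{\xi\in\Xi:\#\xi\le w\}$ is not a submonoid (stacking two single-cluster partitions already leaves it), so your $\scF_w$ is not an $E_1$-subalgebra. The paper instead filters by the \emph{maximum} weight among the indecomposable factors of the reduced representative (all labels non-basepoint); that does give $E_1$-subalgebras.

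Second—and this is the substantive gap—the cell attached at stage $w$ is \emph{not} $\bm{X}^{\wedge K(e)}$ glued along the basepoint. When one coordinate of a tuple in $\bm{X}^{K(e)}$ degenerates, the surviving clusters form a specific non-trivial configuration in $\scF_{w-1}$, so the attaching map is non-null. The paper attaches $\bigvee_{\#e=w}\bm{X}^{K(e)}$ along the degenerate locus $\bigvee_{\#e=w}\bm{X}^{\Delta K(e)}\to U\scF_{w-1}$ (Construction~\ref{constr:attach}, Lemma~\ref{lem:eqAtt}). The reduction to smash products happens only \emph{after} applying $B$: the cofibre sequence $\bm{X}^{\Delta K(e)}\to\bm{X}^{K(e)}\to\bm{X}^{\wedge K(e)}$ splits after one suspension, so attaching $\Sigma\bm{X}^{K(e)}$ along $\Sigma\bm{X}^{\Delta K(e)}\to B\scF_{w-1}$ is equivalent to wedging on $\Sigma\bm{X}^{\wedge K(e)}$. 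You correctly flag the boundary behaviour as the obstacle, but well-basedness alone does not make the attaching map null; the suspension splitting is the missing mechanism.
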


\begin{expl}In many special cases, Theorem~\ref{thm:BCRX} has an easier shape:
  \begin{enumerate}
  \item For $\bmX=\ul{\S}^0$, this is precisely \tref{Example}{ex:CRS0}.
  \item Given $k\ge 1$ and a well-based space $X$, endowed with the trivial
    $\frS_k$-action, the case of $\bmX\coloneqq X[k]$ recovers
    Theorem~\ref{thm:a}. Note that only entanglement types $e$ with
    $K(e)=r\times k$ for some $r$ are relevant here, since
    $X[k]^{\wedge K(e)}=*$ otherwise.
  \item If $X$ is a well-based space, then the case of $\bmX\coloneqq X[1]$
    recovers Segal’s result: if $K\ne (1,\dotsc,1)$, then
    $X[1]^{\smash{\wedge K}}=*$, but there is only one entanglement type
    involving only singletons, namely $(\{1\})$. Thus, we get
    $BC(\R;X)=BC(\R;X[1])\simeq \Sigma X$.
  \end{enumerate}
\end{expl}

\begin{rem}\label{rem:noSym}
  The reader should not be surprised by the fact that the symmetric
  actions on $\bm{X}$ do not appear on the right side—they are also irrelevant
  for the left side: for each tuple $K=(k_1,\dotsc,k_r)$, the action of
  $\prod_i\frS_{\smash{k_i}}$ on $\tilde{C}_K(\R)$ induces a free action on $\pi_0$, so
  we can alternatively restrict to the subspace $\tC^<_K(\R)$ containing
  configurations $(\vec z_1,\dotsc,\vec z_r)$ of clusters where each cluster
  $\vec z_i=(z_{i,1},\dotsc,z_{i,k_i})$ satisfies $z_{i,1}<\dotsb<z_{i,k_i}$ in
  $\R$: if we write $\mathbf{1}\coloneqq (1)_{k\ge 0}$ for the sequence of trivial
  groups, then we get the equivalent description\vspace*{-1px}
    \[C(\R;\bm{X}) \cong \int^{K\in \mathbf{1}\swr\mathbf{Inj}}\tC^<_K(\R)\times\bm{X}^K.\]
\end{rem}

We prove \tref{Theorem}{thm:BCRX} by decomposing $C(\R;\bmX)$
into \emph{free} $E_1$-algebras as follows:

\begin{constr}
  For each integer $w\ge 0$, let $\scF_w\Xi\subseteq \Xi$ be the
  submonoid generated by all entanglement types of weight at most $w$. This
  gives rise to a filtration, which is exhaustive since $e\in \scF_{\# e}\Xi$.
  
  Using the map $\chi\colon \coprod_{[K]}C_K(\R)\to \Xi$ from
  \tref{Example}{ex:CRS0}, we construct an exhaustive filtration of $C(\R;\bmX)$
  for each based symmetric sequence $\bmX$ by defining
  \[\textstyle \scF_wC(\R;\bmX)\coloneqq
    \mathopen{}\Big\{\hspace*{-1.5px}\sum_i
    \vec z_i\otimes x_i;\,\text{all $x_i\ne *$ and }\chi(\sum_i[\vec z_i])\in
    \scF_w\Xi\hspace*{-.5px}\Big\}\mathclose{}.\]
  Since $\chi$ is a map of $E_1$-algebras, each $\scF_wC(\R;\bmX)$ is an
  $E_1$-subalgebra of $C(\R;\bmX)$, and since the bar construction commutes with
  filtered colimits, we recover $BC(\R;\bmX)$ as the direct limit of the spaces
  $B\scF_\bullet C(\R;\bmX)$.
\end{constr}

Visually, given a labelled configuration inside $C(\R;\bmX)$, two clusters with
non-trivial label are ‘entangled’ if their convex hulls on the real line
intersect, see \tref{Figure}{fig:vr01s0}, and each equivalence class with
respect to this relation determines an entanglement type (in
\tref{Figure}{fig:vr01s0}, there are two equivalence classes, with weights $4$
and $1$). Then $\scF_wC(\R;\bmX)$ contains all configurations for which only
entanglement types of weight at most $w$ occur.\looseness-1

The main part of the proof of \tref{Theorem}{thm:BCRX} is to see that
$\scF_wC(\R;\bmX)$ is equivalent to an $E_1$-algebra that arises from
$\scF_{w-1}C(\R;\bmX)$ by attaching a free $E_1$-algebra. Let us first establish
the notion of an $E_1$-cell attachment, which is inspired by
\cite{Galatius-Kupers-RW-2018}.

\begin{constr}
  If $\scO$ is an operad with $\scO(0)=\{*\}$, then each $\scO$-algebra has an
  underlying based space. The forgetful functor $U$ to based spaces has a left adjoint,
  called $F$. Explicitly, $FX$ is given by quotienting
  $\coprod_{r\ge 0}\scO(r)\times_{\frS_r}X^r$ by the basepoint relations from
  §\,\ref{sec:introduction}.\looseness-1

  For a map $\imath\colon A\to Y$ of based spaces, an $\scO$-algebra $M$,
  and a based map \mbox{$g\colon A\to UM$}, we define the \emph{$\scO$-cell attachment}
  $\smash{M\sqcup^{\hspace*{1px}\smash{\scO}}_A Y}$ as the pushout of $\scO$-algebras
  \[\begin{tikzcd}
      FA\ar[r,"\bar g"]\ar{d}[swap]{F\imath}\arrow[dr, phantom, "\ulcorner", very near end] & M\ar[d]\\
      FY\ar[r] & M\sqcup^{\hspace*{1px}\smash{\scO}}_A Y,
    \end{tikzcd}\]
  where $\bar g$ is the adjoint of $g$. If $T\coloneqq UF$ denotes the monad
  associated with $\scO$, then $\smash{M\sqcup^{\hspace*{1px}\smash{\scO}}_A Y}$
  is the reflexive coequaliser (in $\scO$-algebras, as well as in based spaces)
  of
  \begin{align}\label{eq:coeq}
    F(TUM\sqcup_{A}Y)\rightrightarrows F(UM\sqcup_AY).
  \end{align}
  Here $UM\sqcup_A Y$ and $TUM\sqcup_A Y$ are pushouts of based spaces, the first
  arrow of (\ref{eq:coeq}) is induced by the action $TUM\to UM$, the second arrow is given by
  applying $F$ to the inclusion $TUM\sqcup_{A} Y\to T(UM\sqcup_A M)$ and
  com\-posing with the counit $FT=FUF\Rightarrow F$, and the degeneracy is
  induced by the unit $UM\to TUM$, see
  \cite[§\,6.1]{Galatius-Kupers-RW-2018}.\looseness-1
\end{constr}

\begin{expl}\label{ex:spaceE1push}
  Let us unravel the above construction in two cases:
  \begin{enumerate}
  \item Restricting to one model, an \emph{$E_1$-algebra} is the same as
    an algebra over $\scC_1$. If $M$ is an $E_1$-algebra
    and $\imath\colon A\to Y$ and $f\colon A\to UM$ are based maps, then
    points in $\smash{M\sqcup^{\smash{E_1}}_A Y}$ are given by
    configurations of disjoint subintervals
    \mbox{$c_1,\dotsc,c_s\colon [0;1]\hookrightarrow [0;1]$}, each carrying a label
    in $UM\sqcup_A Y$, quotiented by the usual basepoint
    relation; and additionally, if $c_i$ is labelled by
    $\lambda_t(c'_{\smash 1},\dotsc,c'_{\smash t};m_1,\dotsc,m_t)\in M$ with
    $(c'_1,\dotsc,c_t')\in\scC_1(t)$, $m_1,\dotsc,m_t\in M$, and
    $\lambda_t\colon \scC_1(t)\times M^{\smash t}\to M$ being the $\scC_1$-action, then the
    configu\-ration is identified with the one where the interval $c_i$ is replaced
    by the intervals
    $c_{\smash i}^{\vphantom\prime}\circ c_{\smash 1}',\dotsc,c_{\smash
      i}^{\vphantom\prime}\circ c_{\smash t}'$, carrying the labels
    $m_1,\dotsc,m_t$, respectively.\looseness-1
  \item Algebras over the \emph{associative operad} are the same as topological
    monoids. If $M$ is a topological monoid and $\imath\colon A\to Y$ and
    $f\colon A\to UM$ are based maps, then points in
    $\smash{M\sqcup^{\smash{\text{Mon}}}_A Y}$ are given by
    \emph{strings} $\zeta_1\dotsm\zeta_r$ with $\zeta_i\in UM\sqcup_A Y$.
    If $\zeta_i$ is the basepoint, then it can be omitted from the string,
    and if $\zeta_i,\zeta_{i+1}\in UM$, then the substring $\zeta_i\zeta_{i+1}$
    can be replaced by the single letter that equals the actual product $\zeta_i\cdot \zeta_{i+1}$.
  \end{enumerate}
\end{expl}

\begin{rem}\label{rem:E1homo}
  For our purposes, it is convenient to have a homotopically better behaved
  construction: the reflexive pair in (\ref{eq:coeq}) is part of an entire
  simplicial $\scO$-algebra\looseness-1
  \[P^\scO_\bullet(M,A,Y)\colon [n]\mapsto F(T^nUM\sqcup_{A} Y).\]
  Its geometric realisation is denoted by
  $M\cup^{\smash{\hspace*{1px}\scO}}_A Y$ and called the \emph{derived
    $\scO$-cell attachment}, see \cite[§\,3.1]{Kupers-Miller},
  \cite[§\,8.3.6]{Galatius-Kupers-RW-2018}.  We have an augmentation
  \mbox{$P^\scO_\bullet(M,A,Y)\to M\sqcup^{\smash{\hspace*{1px}\scO}}_A Y$},
  inducing a map
  $M\cup^{\smash{\hspace*{1px}\scO}}_A Y\to
  M\sqcup^{\smash{\hspace*{1px}\scO}}_A Y$ of $\scO$-algebras.  Therefore, maps
  out of the derived attachment into another $\scO$-algebra can equally well be
  declared on $M$ and $Y$.\looseness-1

  If $\imath\colon A\to Y$ is a cofibration between well-based spaces (in the
  Quillen model structure of spaces) and if $M$ is cofibrant (in the projective
  model structure on $\scO$-algebras), then the above map
  $M\cup^{\smash{\hspace*{1px}\scO}}_A Y\to M\sqcup^{\smash{\hspace*{1px}\scO}}_A Y$
  is a weak equivalence, compare \cite[§\,8.2]{Galatius-Kupers-RW-2018}: this
  reflects the fact that under these conditions, the \emph{actual} pushout is a
  \emph{homotopy} pushout.\looseness-1

  In the case of $E_1$-algebras, it follows\footnote{To be precise,
    \cite[Prop.\,98]{Kupers-Miller} only treats the case where $(Y,A)$ is a disc
    $(\bbD^n,\S^{n-1})$. However, the proof goes through for the general case
    without any modifications.} from \cite[Prop.\,98]{Kupers-Miller} that the
  bar construction $\smash{B(M\cup^{\smash{E_1}}_A Y)}$ arises from $BM$ by
  attaching $\Sigma Y$ along the map $\Sigma A\to \Sigma UM\to BM$, i.e.\
  the bar construction turns derived $E_1$-attachments into suspended attachments.
\end{rem}

After this general interlude, let us come back to the configuration spaces $C(\R;\bmX)$.
\enlargethispage{\baselineskip}

\begin{defi}\label{def:degtup}
  Let $\bmX=(X_k)_{k\ge 1}$ be a sequence of well-based spaces and
  \mbox{$K=(k_1,\dotsc,k_r)$} be a tuple of positive integers. Then we define
  \[\bm{X}^{\Delta K} \coloneqq \set{(x_1,\dotsc,x_r)\in \bm{X}^K;\,x_i=*_{k_i}
      \text{ for some $i$}}\subseteq \bm{X}^K\]
  as the subspace of \emph{degenerated tuples}, with basepoint
  $(*_{k_1},\dotsc,*_{k_r})$. Note that since each $X_k$ is assumed to be
  well-based, $\bmX^{\Delta K}\hookrightarrow \bmX^K$ is a cofibration of
  well-based spaces.
\end{defi}

\begin{constr}\label{constr:attach}
  For each entanglement type $e$ of weight $w$, we have a based map
  $\tilde f_e\colon \bmX^{K(e)}\to \scF_{w}C(\R;\bmX)$ defined as follows: if we
  write $e=(S_1,\dotsc,S_w)$ and include the set $\{1,\dotsc,|e|\}$ canonically
  into $\R$, then each subset $S_i$, together with the order inherited from
  $\R$, can be regarded as an ordered cluster $\vec z_i$, and we put
  \[\tilde f_e(x_1,\dotsc,x_w)\coloneqq
    \sum_{i=1}^w \vec z_i\otimes x_i\in \scF_w C(\R;\bmX).\]
  If $(x_1,\dotsc,x_w)$ lies in the subspace $\bmX^{\Delta K(e)}$, then
  the labelled configuration $\tilde f_e(x_1,\dotsc,x_w)$ has at most $w-1$ non-trivial
  clusters, and thus, the restriction $f_e$ of $\tilde f_e$ to $\bmX^{\Delta K(e)}$
  lands in the filtration component $\scF_{w-1}C(\R;\bmX)$. If we use the
  bouquet $\bigvee_{\#e =w}f_e$ to $E_1$-attach $\bigvee_{\# e=w}\bmX^{K(e)}$ to
  $\scF_{w-1}C(\R;\bmX)$ in the derived sense, then the extensions $\tilde f_e$
  declare, via the universal property, a map of $E_1$-algebras under
  $\scF_{w-1}C(\R;\bmX)$ of the form
  \begin{align}\label{eq:phi}
    \textstyle\phi_w\colon \scF_{w-1}C(\R;\bmX)
    \cup^{\smash{E_1}}_{\bigvee_e\hspace*{-.5px}\bmX^{\Delta K(e)}}
    \bigvee_{\# e=w} \bmX^{\smash{K(e)}}\longrightarrow \scF_wC(\R;\bmX).
  \end{align}
\end{constr}

\begin{lem}\label{lem:eqAtt}
  The map $\phi_w$ is an equivalence of $E_1$-algebras.
\end{lem}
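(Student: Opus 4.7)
The plan is to exhibit $\phi_w$ as a weak equivalence of underlying spaces by producing an essentially explicit homotopy inverse. Following \tref{Example}{ex:spaceE1push}(i), I would represent a point in the source
\[\textstyle M\cup^{\smash{E_1}}_{\bigvee_e\bmX^{\Delta K(e)}}\bigvee_{\# e=w}\bmX^{K(e)},\qquad M\coloneqq \scF_{w-1}C(\R;\bmX),\]
by a configuration of pairwise disjoint subintervals $c_1,\dotsc,c_s\colon[0;1]\hookrightarrow[0;1]$, each labelled by a non-basepoint element of the pushout $Z\coloneqq M\sqcup_{\bigvee\bmX^{\Delta K(e)}}\bigvee\bmX^{K(e)}$, modulo the basepoint and $\scC_1$-nesting identifications. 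The map $\phi_w$ then places, for each $c_j$ labelled by $\zeta_j\in Z$, the configuration $\zeta_j$ (respectively $\tilde f_e(\zeta_j)$, when $\zeta_j\in\bmX^{K(e)}$) on $\R\cong(0;1)$ after rescaling by $c_j$.

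The homotopy inverse $\psi_w$ is constructed by decomposing configurations via their entanglement components. Given $C\in\scF_wC(\R;\bmX)$, partition its non-trivial clusters into entanglement equivalence classes $E_1,\dotsc,E_s$ (ordered by position on $\R$); each class has weight $\le w$, and classes of weight exactly $w$ are precisely the new cells. The convex hulls of the $E_j$ form disjoint intervals $I_1<\dotsb<I_s\subseteq \R$. I then enlarge these canonically to disjoint open subintervals $c_j\supseteq I_j$—say, by letting $c_j$ extend to the midpoint between $I_j$ and $I_{j\pm 1}$ on each side, and to the endpoints of $(0;1)$ for the extreme classes—and label $c_j$ by $E_j\in Z$ (where weight-$w$ classes are interpreted as elements of $\bmX^{K(e)}$ after rigidifying their internal order via $\R$). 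This produces $\psi_w(C)$, and $\phi_w\circ\psi_w=\Id$ strictly. A homotopy $\psi_w\circ\phi_w\simeq \Id$ is given by linearly interpolating each subinterval $c_j$ of a given source point back to the canonical position determined by the entanglement hulls of its label, absorbing the rescaling on $M$-labels via the $\scC_1$-action.

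The main obstacle is continuity of $\psi_w$ across the strata where the entanglement combinatorics jump: as a label $x_i$ approaches its basepoint $*_{k_i}$, a cluster vanishes, and two previously separate entanglement classes can merge, or a weight-$w$ class can degenerate to one of strictly smaller weight. This is exactly where the well-basedness hypothesis enters, via the cofibration $\bmX^{\Delta K(e)}\hookrightarrow\bmX^{K(e)}$ from \tref{Definition}{def:degtup}. Combined with the pushout identification along $f_e$ used to form $Z$, this cofibration ensures that every such boundary transition on the target side corresponds to a matching identification on the source side; the fact that we work with the \emph{derived} cell attachment (\tref{Remark}{rem:E1homo}), rather than the strict one, is what makes this matching a weak equivalence rather than requiring it on the nose. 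Once continuity of $\psi_w$ and the two homotopies are verified on the open strata indexed by the number and weights of entanglement classes, the equivalence follows, and the $E_1$-algebra structure is preserved because $\phi_w$ was constructed by the universal property of the derived attachment.
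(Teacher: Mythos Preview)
Your proposal has a genuine gap at precisely the point you flag but do not resolve: the continuity of $\psi_w$. The construction ``take the entanglement classes, enclose them in midpoint-intervals, and label accordingly'' is \emph{not} continuous on $\scF_wC(\R;\bmX)$. When a label $x_i$ tends to its basepoint, an entanglement class of weight $w$ can split into several classes of smaller weight; before the limit, $\psi_w$ produces a single interval labelled in $\bmX^{K(e)}$, while at the limit it produces several intervals labelled in $M$, with entirely different midpoint positions. Your sentence ``well-basedness \dots\ ensures that every such boundary transition corresponds to a matching identification on the source side'' is not an argument: the pushout relation along $f_e$ identifies the degenerate label with the \emph{specific} configuration $f_e(x_1,\dotsc,x_w)\in M$, which sits at the integer positions prescribed by $\tilde f_e$, not at the positions your midpoint rule produces. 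There is no identification in the source that matches your intervals up. Relatedly, your claim $\phi_w\circ\psi_w=\Id$ is already false for weight-$w$ classes: you discard their internal positions when passing to $\bmX^{K(e)}$, and $\phi_w$ then reinstalls them at the standard positions of $\tilde f_e$, not where they started. Finally, the point-level description you use for the source comes from \tref{Example}{ex:spaceE1push}\,(1), which describes the \emph{strict} attachment $M\sqcup^{E_1}_A Y$; the map $\phi_w$, however, is defined out of the \emph{derived} attachment $M\cup^{E_1}_A Y$, a geometric realisation of a simplicial object into which one cannot simply write down points.

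The paper's proof is organised exactly to avoid these issues. It replaces $C$ by a discrete rectification $D$ (a genuine monoid built from $\pi_0\tC_K(\R)$), with filtration-preserving equivalences $\rho_\bullet\colon \scF_\bullet C\to\scF_\bullet D$; it then compares the derived $E_1$-attachment to the derived \emph{monoid} attachment along the monad map $T^{E_1}\Rightarrow T^{\text{Mon}}$; and finally it shows that for $D$ the \emph{strict} monoid pushout $\scF_{w-1}D\sqcup^{\text{Mon}}_{\bmX^{\Delta}_w}\bmX_w\to\scF_wD$ is an \emph{isomorphism}, with inverse given by the entanglement decomposition. In $D$ there are no interval positions to manage and the string relations of \tref{Example}{ex:spaceE1push}\,(2) line up on the nose, so the continuity problem that blocks your direct approach simply disappears. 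The passage from strict to derived is then handled by an inductive cofibrancy argument, and $2$-out-of-$3$ finishes the proof.
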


This shows that, up to equivalence, $C(\R;\bmX)$ can inductively be built by
attaching free $E_1$-algebras. We first prove \tref{Theorem}{thm:BCRX} using
\tref{Lemma}{lem:eqAtt}, and then prove the Lemma.

\begin{proof}[Proof of \tref{Theorem}{thm:BCRX}]
  Let us abbreviate $C\coloneqq C(\R;\bmX)$.  Using that each $X_k$ is
  well-based, the inclusions \mbox{$U\scF_{w-1}C\hookrightarrow U\scF_wC$}, and
  hence also the inclusions $B\scF_{w-1}C\hookrightarrow B\scF_w C$, are
  (Hurewicz) cofibrations of spaces.  Therefore, $BC$ is equivalent to the
  \emph{homotopy} colimit over the filtration components $B\scF_\bullet C$.
  Since $B\scF_0C$ is just a point, it suffices to show that the induced map
  \mbox{$B\scF_{w-1}C\to B\scF_wC$} is equivalent to the inclusion into the
  bouquet
  \mbox{$B\scF_{w-1}C\hookrightarrow B\scF_{w-1}C\vee \Sigma\bigvee_{\# e=w}
    \bmX^{\wedge K(e)}$} for each $w$.\looseness-1

  This equivalence is established in two steps: first, we use the equivalence
  $\phi_w$ from \tref{Lemma}{lem:eqAtt}. Again, since each $X_k$ is well-based,
  the induced map $B\phi_w$ is a weak equivalence of based spaces (the map
  $B_\bullet(\Sigma,T^{\smash{E_1}},\phi_w)$ among the two-sided bar
  constructions is a levelwise equivalence of proper simplicial spaces). As the
  bar construction turns $E_1$-attachments into suspended attachments
  (see Remark~\ref{rem:E1homo}), we get a homotopy pushout\looseness-1
  \[
    \begin{tikzcd}
      \Sigma\bigvee_{\# e=w} \bmX^{\Delta K(e)}\ar[r]\ar[d,hook]\arrow[dr, phantom, "\ulcorner", very near end] & B\scF_{w-1}C\ar[d]\\
      \Sigma\bigvee_{\# e=w} \bmX^{K(e)} \ar[r] & B\scF_w C.
    \end{tikzcd}
  \]
  Second, we consider the left vertical map: by elementary homotopy theory, the
  cofibre sequence $\bmX^{\Delta K(e)}\to \bmX^{K(e)}\to \bmX^{\wedge K(e)}$
  splits after a single suspension for each $e$. Thus, each of the
  summands in the left vertical map above is equivalent to the wedge inclusion
  \mbox{$\Sigma\bmX^{\Delta K(e)}\hookrightarrow \Sigma\bmX^{\Delta K(e)}\vee
    \Sigma\bmX^{\wedge K(e)}$}. As the attaching map is, for each $e$, defined
  on the \emph{first} of the two wedge summands, the attachment is the same as
  adding the \emph{second} one.\looseness-1
\end{proof}

\clearpage

\newcommand{\XDel}{\smash{\bmX}^{\smash{\Delta}}_{\smash{w}}\vphantom{X^r}}
\newcommand{\xDel}{\smash{\bmX}^{\smash{\Delta}}_{\smash{w}}\vphantom{X}}

\begin{proof}[Proof of \tref{Lemma}{lem:eqAtt}]
  Recall that we have to show that the map $\phi_w$ from (\ref{eq:phi}) is a
  weak equivalence. First, we simplify our notation: as before, we write
  $C\coloneqq C(\R;\bmX)$; and additionally, let
  $\tilde f_w\coloneqq \bigvee_{\# e=w}\tilde f_e$,
  $f_w\coloneqq\bigvee_{\# e=w}f_e$, $\bmX_w\coloneqq \bigvee_{\# e=w}\bmX^{K(e)}$, and
  ${\bmX_w^\Delta}\coloneqq \bigvee_{\# e=w}\bmX^{\Delta K(e)}$.\looseness-1

  Now the proof strategy is to ‘discard’ contractible information on both sides
  of $\phi_w$ by introducing a ‘thin’ version $D$ of $C$, which is even a
  topological monoid, and which comes with a filtration $\scF_\bullet D$ by
  submonoids. The proof then proceeds as follows:
  \begin{enumerate}
  \item Construct the topological monoid $D$ and its filtration
    $\scF_\bullet D$, and construct equivalences
    $\rho_\bullet\colon \scF_\bullet C\to \scF_\bullet D$ of $E_1$-algebras,
    which commute with the inclusions.
  \item We use $\rho_{w-1}f_w\colon {\bmX_w^\Delta}\to \scF_{w-1}D$ to attach $\bmX_w$ to
    $\scF_{w-1}D$. Show that the induced map
    $\smash{\rho_{w-1}\cup^{\smash{E_{\smash 1}}}_{\XDel}\bmX_w\colon
      \scF_{w-1}C\cup^{\smash{E_{\smash 1}}}_{\XDel} \bmX_w\to \scF_{w-1}D\cup^{\smash{E_{\smash 1}}}_{\XDel}
      \bmX_w}$ is an equivalence.\looseness-1
  \item Let $\alpha\colon T^{\smash{E_1}}\Rightarrow T^{\text{Mon}}$ be the
    transformation of monads. Show that the induced map
    $\scF_{w-1}D\cup^{\hspace*{1px}\alpha}_{\xDel} \bmX_w\colon
    \scF_{w-1}D\cup^{\smash{E_{\smash 1}}}_{\XDel}\bmX_w\to
    \scF_{w-1}D\cup^{\smash{\text{Mon}}}_{\xDel}\bmX_w$ is an equivalence.
  \item Show that the map
    $\smash{\psi_w\colon \scF_{w-1}D\cup^{\smash{\text{Mon}}}_{\xDel}\bmX_w\to \scF_wD}$
    that is, via the universal property, induced by
    $\smash{\rho_w\tilde f_w\colon \bmX_w\to \scF_w D}$ is an equivalence.
  \end{enumerate}
  \noindent Since $\phi_w$ is induced by $\tilde f_w\colon \bmX_w\to \scF_w C$ and
  $\psi_w$ is induced by $\rho_w\tilde f_w\colon \bmX_w\to \scF_w D$, the above maps
  assemble into a commutative square
  \begin{align}
    \begin{tikzcd}[ampersand replacement=\&,column sep=5em]
      \scF_{w-1}C\cup^{\smash{E_{\smash 1}}}_{\XDel} \bmX_w\ar[r,"\phi_w"]\ar{d}{\simeq}[swap]{\rho_{w-1}\cup^{\smash{E_{\raisebox{1pt}{\scaleto{1}{3pt}}}}}_{{\smash{\bmX}_{\raisebox{1px}{\scaleto{w}{2pt}}}^{\hspace*{-.1px}\scaleto{\Delta}{3pt}}}} \bmX_w}
      \& \scF_w C\ar{dd}{\rho_w}[swap]{\simeq}\\
      \scF_{w-1}D\cup^{\smash{E_{\smash 1}}}_{\XDel}\bmX_w\ar{d}[swap]{\scF_{w-1}D\hspace*{.5px}\cup^{\hspace*{1px}\alpha}_{{\smash{\bmX}_{\raisebox{1px}{\scaleto{w}{2pt}}}^{\hspace*{-.1px}\scaleto{\Delta}{3pt}}}} \bmX_w}{\simeq} \&\\
      \scF_{w-1}D\cup^{\smash{\text{Mon}}}_{\xDel}\bmX_w\ar{r}{\psi_w}[swap]{\simeq} \& \scF_wD.
    \end{tikzcd}
  \end{align}
  It then follows from the 2-out-of-3-property that the map $\phi_w$ in question
  is a weak equivalence, which finishes the proof. Let us go through the steps
  1\,–\,4:\looseness-1
  
  \medskip
  
  1.\hspace{.5em}Replacing $\tC_K(\R)$ by its set of path components, we define
  $D \coloneqq \int^{K}\pi_0\tilde C_K(\R)\times \bmX^K$.  Then elements in $D$
  are equivalence classes $[\xi;x_1,\dotsc,x_r]$, where $\xi=(S_1,\dotsc,S_r)$
  is a partition and where $x_i\in X_{\# S_i}$; and if $x_i$ is the basepoint,
  then $[\xi;x_1,\dotsc,x_r]$ is identified with
  $[d_i\xi;x_1,\dotsc,\widehat{x_i},\dotsc,x_r]$, where $d_i\xi$ arises from
  $\xi$ by removing $S_i$ and relabelling the remaining subsets. Defining
  $[\xi;x_{\smash 1}^{\vphantom\prime},\dotsc,x_{\smash
    r}^{\vphantom\prime}]\cdot [\xi';x'_{\smash
    1},\dotsc,x'_{\smash{r'}}]\coloneqq [\xi\sqcup \xi';x_{\smash
    1}^{\vphantom\prime},\dotsc,x_{\smash r}^{\vphantom\prime},x_{\smash
    1}',\dotsc,x_{\smash{r'}}']$, $D$ becomes a topological monoid, in
  particular an $E_1$-algebra. Moreover, $D$ is filtered by submonoids
  $\scF_wD\subseteq D$ containing only points that can be represented by
  $(\xi;x_1,\dotsc,x_r)$ where no $x_i$ is a basepoint and $\xi\in \scF_w\Xi$.
  We have, for each $w$, a map \mbox{$\rho_w\colon \scF_wC\to \scF_wD$} induced
  by the canonical projections $\tC_K(\R)\to \pi_0 \tC_K(\R)$.  This clearly is
  a morphism of $E_1$-algebras, and it commutes with the filtration in the sense that
  the (co-)restriction of $\rho_w$ to the $(w-1)$\textsuperscript{st} filtration
  level is precisely $\rho_{w-1}$. We show that each $\rho_w$ is a homo\-topy
  equivalence: since each $X_k$ is well-based, we find, for each $k\ge 1$, a
  map $u_k\colon X_k\to [0;1]$ satisfying
  $u_{\smash k}^{-1}(0)=\{*_k\}$. These can be used to construct a section $s_w$
  of $\rho_w$ by sending $[\xi;x_1,\dotsc,x_r]$ to a labelled configuration in
  $\R$, employing the unique inclusion
  $\nu\colon \{1,\dotsc,|K|\}\hookrightarrow \R$ with $\nu(1)=0$ and
  $\nu(j+1)-\nu(j)=u_{k_i}(x_i)$ for $j\in S_i$. Finally, the composition
  $s_w\circ\rho_w$ is homotopic to the identity by linear
  interpolation.\footnote{This is the usual argument showing that for a
    well-based space $X$, the classical labelled configuration space $C(\R;X)$
    is equivalent to the reduced James product over $X$.}\looseness-1

  \medskip
  
  2.\hspace*{.5em}Since the monad $T\coloneqq T^{\smash{E_1}}$ preserves
  well-based objects and equivalences between them, and since
  $\bmX_w^{\smash\Delta}\hookrightarrow \bmX_w^{\vphantom{\smash\Delta}}$ is a
  cofibration, the maps $\smash{T^\bullet\rho_{w-1}\sqcup_{\xDel} \bmX_w}$ are
  equivalences, and the same applies to
  $\smash{T(T^\bullet\rho_{w-1}\sqcup_{\xDel} \bmX_w)}$.  This shows that
  $UP^{\smash{E_1}}_\bullet(\rho_{w-1},\bmX_w^{\smash\Delta},\bmX_w^{\vphantom{\smash
      \Delta}})$ is a levelwise equivalence; finally, we use that the simplicial
  spaces on both sides are proper as the unit of the monad is a cofibration.

  \medskip

  3.\hspace*{.5em}We use that
  $\alpha\colon T^{\smash{E_1}}Y\to T^{\smash{\text{Mon}}}Y$ is a homotopy
  equivalence if $Y$ is well-based; this is just a variation of the above
  argument. This shows that the induced map
  $UP^{\smash{E_1}}_\bullet(\scF_{w-1}D,\bmX_w^{\smash\Delta},\bmX_w^{\vphantom{\smash\Delta}})\to
  UP^{\smash{\text{Mon}}}_\bullet(\scF_{w-1}D,\bmX_w^{\smash\Delta},\bmX_w^{\vphantom{\smash\Delta}})$ is a
  levelwise equivalence. Finally, we use again that both simplicial spaces
  are proper to obtain the substatement.

  \medskip
  
  4.\hspace*{.5em}We have to show that $\psi_w$ is a weak equivalence. To do so,
  we show that the map
  $\psi_w'\colon \scF_{w-1}D\sqcup_{\xDel}^{\smash{\text{Mon}}}\bmX_w\to
  \scF_wD$ from the \emph{strict} pushout is an \emph{isomorphism}.  \mbox{Since
    $\scF_0 D=*$}, this inductively shows that $\scF_{w-1}D$ is cofibrant in the
  projective model structure. As
  $\bmX_w^{\smash\Delta}\hookrightarrow \bmX_w^{\vphantom{\smash\Delta}}$ is a
  cofibration of well-based spaces,
  \mbox{$\smash{\scF_{w-1}D\cup_{\xDel}^{\smash{\text{Mon}}}\bmX_w\to
      \scF_{w-1}D\sqcup_{\xDel}^{\smash{\text{Mon}}}\bmX_w}$} is a weak
  equivalence, which then finishes the proof.  To show that $\psi'_w$ is an
  isomorphism, recall that points in
  $\scF_{w-1}D\sqcup^{\smash{\text{Mon}}}_{\xDel} \bmX_w$ are strings
  $\zeta_1\dotsm\zeta_s$ with letters $\zeta_i$ in the space
  $U\scF_{w-1}D\sqcup_{\xDel} \bmX_w$, identified by the relations from
  \tref{Example}{ex:spaceE1push}. Then the inverse of $\psi'_w$ is given as
  follows: each point $m\in \scF_w D$ can be written as
  \mbox{$[\xi;x_1,\dotsc,x_r]$} such that no $x_i$ is the respective
  basepoint. We can decompose \mbox{$\xi=e_1\sqcup\dotsb\sqcup e_s$} into
  entanglement types, i.e.\ $m=[e_1;\bm{x}_1]\dotsm [e_s;\bm{x}_s]$ with
  $\bm{x}_i\coloneqq (x_{w_1+\dotsb+w_{i-1}+1},\dotsc,x_{w_1+\dotsb+w_i})$ for
  $w_i\coloneqq \# e_i$.  If $w_i\le w-1$, then the factor $[e_i;\bm{x}_i]$
  already lies in $U\scF_{w-1}D$, and if $w_i=w$, then $[e_i,\bm{x}_i]$ can be
  regarded as an element in $\bmX_w$.  In this way, $m$ determines a string with
  letters in $U\scF_{w-1}D\sqcup_{\xDel}\bmX_w$ as above. One easily checks that
  this assignment factors through the relations for $\scF_w D$ and indeed forms
  an inverse of $\phi_w'$.\looseness-1
\end{proof}

\begin{cor}
  Let $\bmX=(X_k)_{k\ge 1}$ be a well-based sequence such that each $X_k$ is path-connected.
  Then $C(\R;\bmX)$ is equivalent to a free $E_1$-algebra.
\end{cor}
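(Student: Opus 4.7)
The plan is to combine Theorem~\ref{thm:BCRX} with group completion and the James construction. Under the connectivity hypothesis, $C(\R;\bmX)$ will be path-connected, hence group-complete as an $E_1$-algebra, which identifies it with $\Omega B C(\R;\bmX)$; and the latter will be a free $E_1$-algebra by James' theorem because $BC(\R;\bmX)$ is a suspension of a connected well-based space.

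First I would verify path-connectedness of $C(\R;\bmX)$. Given any labelled configuration $\sum_i \vec z_i \otimes x_i$, choose, for each $i$, a path inside $X_{k_i}$ from $x_i$ to the basepoint $*_{k_i}$. Executing these paths simultaneously yields a continuous homotopy inside $C(\R;\bmX)$: the coend relation deletes each cluster the instant its label hits the basepoint, so the homotopy terminates at the empty configuration. Hence $\pi_0 C(\R;\bmX)$ is trivial, so $C(\R;\bmX)$ is a group-complete $E_1$-algebra, and the canonical comparison map $C(\R;\bmX) \to \Omega B C(\R;\bmX)$ is a weak equivalence of $E_1$-algebras.

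Next I would apply Theorem~\ref{thm:BCRX} to identify $B C(\R;\bmX) \simeq \Sigma Y$ with $Y \coloneqq \bigvee_{e\in\bE} \bmX^{\wedge K(e)}$. Each smash product $\bmX^{\wedge K(e)}$ is a smash of well-based, path-connected spaces, hence itself well-based and path-connected, and these properties pass to the wedge $Y$. James' classical theorem then says that for such a $Y$, the canonical comparison map $JY \to \Omega \Sigma Y$ from the James construction to the loop-suspension is a weak equivalence of $E_1$-algebras, where $JY$ is the free topological monoid, equivalently the free $E_1$-algebra, on $Y$. Combining everything yields a chain of $E_1$-algebra equivalences
\[
  C(\R;\bmX) \simeq \Omega B C(\R;\bmX) \simeq \Omega \Sigma Y \simeq JY,
\]
which identifies $C(\R;\bmX)$ with the free $E_1$-algebra on $Y$.

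The point requiring most care is the compatibility with the $E_1$-structure at each link of the chain, as opposed to only the underlying homotopy type. This is classical under the well-basedness hypothesis on each $X_k$: the group-completion map is a map of $E_1$-algebras, the James comparison is constructed as a map of topological monoids, and the looped form of Theorem~\ref{thm:BCRX} is automatically an equivalence of loop spaces. Hence no technical input beyond what was already invoked in the main theorem is required.
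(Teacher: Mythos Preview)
Your proposal is correct and follows essentially the same route as the paper: establish path-connectedness of $C(\R;\bmX)$, invoke the group-completion equivalence $C(\R;\bmX)\simeq\Omega BC(\R;\bmX)$, apply Theorem~\ref{thm:BCRX} to identify $BC(\R;\bmX)\simeq\Sigma Y$ with $Y=\bigvee_e\bmX^{\wedge K(e)}$, and then use the James equivalence $\Omega\Sigma Y\simeq F^{E_1}(Y)$ for path-connected well-based $Y$. The paper's proof is more terse but structurally identical, writing the final free $E_1$-algebra as $F^{E_1}\bigl(\bigvee_e\bmX^{\wedge K(e)}\bigr)$ rather than the James construction $JY$.
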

\begin{proof}
  Since each $X_k$ is path-connected, the $E_1$-algebra $C(\R;\bmX)$ is
  path-connected as well. Therefore, the canonical map
  $C(\R;\bmX)\to \Omega BC(\R;\bmX)$ is an equivalence. Now we
  use that by \tref{Theorem}{thm:BCRX}, $BC(\R;\bmX)$ is equivalent to
  $\Sigma \bigvee_e\bmX^{\wedge K(e)}$. Since $\bigvee_e\bmX^{\wedge K(e)}$
  is path-connected, this establishes an equivalence
  of $E_1$-algebras\looseness-1
  \[\textstyle C(\R;\bmX)\simeq \Omega BC(\R;\bmX)\simeq
    \Omega\Sigma\bigvee_e\bmX^{\wedge K(e)}\simeq
    F^{\smash{E_1}}\pa{\bigvee_e\bmX^{\smash{\wedge K(e)}}}.\qedhere\]
\end{proof}

\section{Iterated bar constructions of vertical configuration spaces}
\label{sec:segal}
While we understood the bar construction of the $E_1$-algebra $C(\R;\bmX)$ in
the previous section, the iterated bar construction of the $E_d$-algebra
$C(\R^d;\bmX)$ still has no geometric interpretation for $d\ge 2$. In this
section, we give a partial answer by introducing a family of subalgebras
$C(\R^{p\kern-.6px,\kern.6pxd-p};\bmX)\subseteq C(\R^d;\bmX)$ and studying their
$p$-fold bar construction.

As already motivated in the introduction, these subalgebras are constructed by
imposing a certain ‘verticality’ condition on particles within the same
cluster. Let us start by making this definition precise.

\begin{defi}\label{def:vert}
  Let $\pi\colon E\to B$ be a map of spaces. A cluster
  $\vec z=(z_1,\dotsc,z_k)$ in $E$ is called \emph{$\pi$-vertical}, if all
  particles $z_1,\dotsc,z_k$ lie in the same fibre.  For each tuple
  $K=(k_1,\dotsc,k_r)$, we let $\tC_K^\pi(E)\subseteq \tC_K(E)$ be the subspace
  of all $(\vec z_1,\dotsc,\vec z_r)$ such that each $\vec z_i$ is
  $\pi$-vertical. Then the action of $\frS_K$ on $\tC_K(E)$ restricts to
  $\tC^\pi_K(E)$ and we define $C^\pi_K(E)$ as the quotient.  We call these
  spaces \emph{ordered} and \emph{unordered vertical configuration spaces},
  respectively.

  The spaces $\tC^\pi_K(E)$ assemble into a functor
  $(\frS\wr\Inj)^{\text{op}}\longrightarrow \mathbf{Top}$ by permuting and
  omitting clusters as before. For a based symmetric sequence
  $\bmX=(X_k)_{k\ge 1}$, we define
  $C^\pi(E;\bmX)\coloneqq \int^K\tC_K^\pi(E)\times \bmX^K$. In other words,
  $C^\pi(E;\bmX)\subseteq C(E;\bmX)$ is the subspace of labelled configurations
  where each cluster is $\pi$-vertical.
\end{defi}

\begin{expl}
  For each $0\le p\le d$, we consider the projection $\pi\colon \R^d\to \R^p$ to
  the first $p$ coordinates, and define—for a tuple $K$ or a sequence $\bmX$,
  respectively—the spaces
  \begin{align*}
    C_K(\R^{p\kern-.6px,\kern.6pxd-p})&\coloneqq C_K^\pi(\R^d),\\
    C(\R^{p\kern-.6px,\kern.6pxd-p};\bmX)&\coloneqq C^\pi(\R^{d};\bmX).
  \end{align*}
  These are exactly the spaces depicted in \tref{Figure}{fig:manyV} from the
  introduction. Note that the subspace
  $C(\R^{p\kern-.6px,\kern.6pxd-p};\bmX)\subseteq C(\R^d;\bmX)$ is even an $E_d$-subalgebra: this
  follows directly from the observation that for each little cube
  $c\colon [0;1]^d\hookrightarrow [0;1]^d$ and a vertical cluster $\vec z$, the
  rescaled cluster $c(\vec z)$ is again vertical.
\end{expl}

Restricting the action of $\scC_d$ to its first $p$
coordinates, we can ask for the $p$-fold bar construction of
$C(\R^{p\kern-.6px,\kern.6pxd-p};\bmX)$, which still is an $E_{d-p}$-algebra. In order to formulate
our result, we need two more definitions:

\begin{defi}\label{def:eqWellBased}
  We call a based symmetric sequence $\bmX=(X_k)_{k\ge 1}$ \emph{equivariantly
    well-based} if each $*\hookrightarrow X_k$ is a
  cofibration in the projective model structure on $\frS_k$-spaces.
\end{defi}

\begin{defi}
  For a based symmetric sequence $\bm{X}=(X_k)_{k\ge 1}$, we define
  $\Sigma \bm{X}$ to be the sequence with $(\Sigma \bm{X})_k = \Sigma X_k$,
  together with the induced $\frS_k$-actions.
\end{defi}

\begin{theo}\label{thm:segal}
  If $\bmX$ is equivariantly well-based, then there is an equivalence of
  $E_{d-p}$-algebras
  \[B^pC(\R^{p\kern-.6px,\kern.6pxd-p};\bm{X})\simeq C(\R^{d-p};\Sigma^p \bm{X}).\]
\end{theo}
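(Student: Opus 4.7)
The plan is to prove the theorem by induction on $p$, reducing to a single ``one-step'' bar construction identity and then appealing to an adaptation of Segal's classical argument. The base case $p = 0$ is tautological since $B^0$ is the identity. For the inductive step, the key claim is the $E_{d-1}$-equivalence
\[BC(\R^{p,q};\bmX) \simeq C(\R^{p-1,q};\Sigma\bmX),\]
with the bar construction taken with respect to the $E_1$-structure coming from the first coordinate. Once this one-step identity is established, iterated application produces
\[B^pC(\R^{p,q};\bmX) \simeq B^{p-1}C(\R^{p-1,q};\Sigma\bmX) \simeq \cdots \simeq C(\R^q;\Sigma^p\bmX),\]
using that $\Sigma$ preserves equivariant well-basedness (the suspension of an $\frS_k$-cofibration is again an $\frS_k$-cofibration), so that the induction hypothesis keeps applying to the suspended sequence.

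To prove the one-step identity, I would adapt Segal's strategy \cite{Segal-1973} and construct an explicit geometric model for the left hand side. Let $\widetilde C$ denote the space of vertical configurations in $[0;1]\times\R^{p-1,q}$, meaning clusters share their first $p$ coordinates (the first one lying in $[0;1]$), quotiented by the relation that every cluster whose first coordinate equals $0$ or $1$ is collapsed to the basepoint. The $E_{d-1}$-structure on $\widetilde C$ is induced by inserting into the last $d-1$ coordinates. There is then a tautological $E_{d-1}$-equivariant homeomorphism $\widetilde C\cong C(\R^{p-1,q};\Sigma\bmX)$: a cluster at first-coordinate $t\in[0;1]$ with label $x\in X_k$ corresponds to a cluster in $\R^{p-1,q}$ labelled by $[t,x]\in\Sigma X_k$, and the boundary collapsing $t\in\{0,1\}$ implements precisely the suspension identifications.

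The remaining step is an $E_{d-1}$-equivalence $\widetilde C\simeq BC(\R^{p,q};\bmX)$. I would realise $\widetilde C$ as the geometric realisation of a simplicial space whose $n$-simplices consist of $n$-tuples of configurations of $C(\R^{p,q};\bmX)$ arranged along $n$ subintervals of $[0;1]$: the inner face maps merge adjacent subintervals (first-coordinate multiplication) and the outer face maps push the outermost subintervals into the boundary (deletion with fading). This is precisely the two-sided bar construction $B_\bullet(*,C(\R^{p,q};\bmX),*)$ after replacing $C(\R^{p,q};\bmX)$ by its topological-monoid model à la \tref{Lemma}{lem:eqAtt}; that replacement is a levelwise $\frS_k$-equivariant equivalence by the Urysohn trick, and equivariant well-basedness of $\bmX$ ensures properness of the simplicial spaces involved so that the strict and derived realisations agree.

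The main obstacle is the careful bookkeeping of $E_{d-1}$-algebra compatibility and $\frS_k$-equivariant cofibrancy throughout. One must verify that the comparison maps, the monoidal replacement, and the Urysohn-type sections can all be chosen to respect the residual $E_{d-1}$-action from the coordinates $2,\dotsc,d$ and to be $\frS_k$-equivariant on each cluster component; the equivariant well-basedness hypothesis on $\bmX$ is exactly the input that makes these choices possible. Once these compatibilities are in place, the one-step identity follows by a coordinate-wise elaboration of Segal's classical proof, completing the induction.
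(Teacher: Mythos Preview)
Your inductive framework and the one-step reduction to $BC(\R^{p,q};\bmX)\simeq C(\R^{p-1,q};\Sigma\bmX)$ match the paper's strategy exactly. The gap is in your claimed ``tautological $E_{d-1}$-equivariant homeomorphism'' $\widetilde C\cong C(\R^{p-1,q};\Sigma\bmX)$: the map you describe is not even well-defined. In $\widetilde C$ one may have two disjoint clusters that agree in all coordinates except the first one $t\in[0;1]$; under your assignment both would be sent to clusters occupying the \emph{same} positions in $\R^{p-1,q}$ (with different suspension labels), which is forbidden in a configuration space. So forgetting the first coordinate does not land in $C(\R^{p-1,q};\Sigma\bmX)$.

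This is precisely the obstacle that forces Segal's argument to pass through an intermediate object, and the paper follows him closely: one introduces the \emph{partial} abelian monoid $D_{p-1,q}(\bmX)$ whose underlying space is $C(\R^{p-1,q};\bmX)$ with addition given by disjoint union, and observes $BD_{p-1,q}(\bmX)\cong C(\R^{p-1,q};\Sigma\bmX)$ (this is the correct version of your ``tautological'' identification --- it works because the simplices of the nerve of $D$ already record disjointness). The real content is then comparing $BC'_{p,q}(\bmX)$ with $BD_{p-1,q}(\bmX)$, which the paper does via the partial submonoid $C''_{p,q}(\bmX)$ of \emph{projectable} configurations (those on which the projection $\R^d\to\R^{d-1}$ is injective) together with an ordered-space\,/\,good-cover argument showing that $C''\hookrightarrow C'$ induces an equivalence on classifying spaces. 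Your outline collapses exactly this step; without it, nothing bridges the gap between the bar construction and the suspended labelling.
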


This equivalence is again a generalisation of Segal’s result \cite{Segal-1973}:
for each well-based space $X$, the labelled configuration space
$C(\R^{p\kern-.6px,\kern.6pxd-p};X[1])$ is isomorphic to $C(\R^d;X)$, since all
clusters have only a single particle, and hence \tref{Theorem}{thm:segal} boils
down to the well-known equivalence $B^pC(\R^d;X)\simeq C(\R^{d-p};\Sigma^p X)$
of $E_{d-p}$-algebras. In the case of $\bmX=X[k]$ for some well-based space $X$
with trivial $\frS_k$-action, we recover Theorem~\ref{thm:b}.

The proof of \tref{Theorem}{thm:segal} is nothing but a straightforward
generalisation of Segal’s proof, using at all stages that inside
$C(\R^{p\kern-.6px,\kern.6pxd-p};\bmX)$, clusters look as \emph{single}
particles from the perspective of the first $p$ coordinates.

\begin{proof}
  We strongly encourage the reader to compare the following proof to Segal’s
  original one \cite{Segal-1973}, as we shortened many arguments that can be
  copied verbatim. Throughout the proof, let us abbreviate $q\coloneqq d-p$.

  \medskip

  1.\hspace*{.5em}We first translate our statement into the language of
  \cite{Segal-1973} by considering a rectification of
  $C_{p\kern-.6px,\kern.6pxq}(\bmX)\coloneqq
  C(\R^{p\kern-.6px,\kern.6pxq};\bmX)$, which is even a true monoid: let
  $\pr_1\colon \R^d\to \R$ be the projection to the first coordinate; then we
  define the \emph{support} of
  \mbox{$c=\sum_i\vec z_i\otimes x_i\in C_{p\kern-.6px,\kern.6pxq}(\bmX)$} as
  $\supp(c)\coloneqq \bigcup_i\pr_1(\vec z_i)\subseteq\R$ and let
  \[\textstyle C'_{p\kern-.6px,\kern.6pxq}(\bmX)\coloneqq \mathopen{}\Big\{(t,c)\in \R_{\ge 0}\times
    C_{p\kern-.6px,\kern.6pxq}(\bmX);\,\supp(c)\subseteq (0;t)\Big\}\mathclose{}.\]
  By putting $(t,c)\cdot (t',c')=(t+t',c+T_tc')$, the space
  $C'_{p\kern-.6px,\kern.6pxq}(\bmX)$ becomes a topological monoid: here $T_t$
  is translation by $(t,0,\dotsc,0)$.  Note that
  $C'_{p\kern-.6px,\kern.6pxq}(\bmX)$ is the ‘Moore’ rectification
  $RC_{p\kern-.6px,\kern.6pxq}(\bmX)$ that appears in \cite[Prop.\,1.9]{Dunn}:
  its bar construction $BC'_{p\kern-.6px,\kern.6pxq}(\bmX)$ is, as an
  $E_{d-1}$-algebra, equivalent to the bar construction
  $BC_{p\kern-.6px,\kern.6pxq}(\bmX)$.  On the other hand, it follows from
  \cite[Cor.\,7.9]{Fiedorowicz} that $BC'_{p\kern-.6px,\kern.6pxq}(\bmX)$ can be
  calculated by the usual nerve construction for topological monoids (rather
  than the two-sided bar construction), which is a clustered version of the
  simplicial space that Segal studied. We show the analogue of
  \cite[Prop.\,2.1]{Segal-1973}: \emph{for each $p\ge 1$, we have an equivalence
    $BC'_{p\kern-.6px,\kern.6pxq}(\bmX)\simeq
    C_{p-1\kern-.3px,\kern.3pxq}(\Sigma\bmX)$ of $E_{d-1}$-algebras.}  Then the
  statement follows by induction.

  \medskip

  2.\hspace*{.5em}We consider the \emph{partial} abelian monoid
  $D_{p-1\kern-.3px,\kern.3pxq}(\bmX)$, whose underlying space\footnote{In
    contrast to Segal, we decided to introduce a new letter $D$ for this to
    avoid confusion when speaking of its bar construction.} is
  $C_{p-1\kern-.3px,\kern.3pxq}(\bmX)$, but where—instead of the
  $E_1$-multiplication—we call two labelled configurations summable if they are
  disjoint; in that case, the sum is their union. Recall that the classifying
  space of a partial monoid $M$ is the geometric realisation of its nerve
  $N_\bullet M$, where $N_nM\subseteq M^n$ contains composable
  $n$-tuples. Exactly as in \cite[Prop.\,2.3]{Segal-1973}, we obtain an
  isomorphism of $E_{d-1}$-algebras
  $BD_{p-1\kern-.3px,\kern.3pxq}(\bmX)\cong
  C_{p-1\kern-.3px,\kern.3pxq}(\Sigma\bmX)$ by amalgama\-ting the levelwise maps
  $\phi_n\colon N_nD_{p-1\kern-.3px,\kern.3pxq}(\bmX)\times\Delta^n\to
  C_{p-1\kern-.3px,\kern.3pxq}(\Sigma\bmX)$ with (writing
  $\Sigma X_k=X_k\wedge S^1$)\looseness-1\vspace*{-3px}
  \[\phi_n\mathopen\bigg(\hspace*{1px}\sum_{i=1}^{r_1}\vec z_{1,i}\otimes x_{1,i},
    \dotsc,\sum_{i=1}^{r_n}\vec z_{n,i}\otimes x_{n,i};t_1\le\dotsb\le t_n\hspace*{-1px}\bigg)\mathclose{}
    = \sum_{j=1}^n\sum_{i=1\vphantom{k}}^{\vphantom{J_g}r_{\kern-.4px\smash j}}\vec z_{j\kern-.3px,\kern.3pxi}\otimes (x_{j\kern-.3px,\kern.3pxi}\wedge t_j).\]

  \medskip

  3.\hspace*{.5em} We have a second projection $\pr_2\colon \R^{d}\to \R^{d-1}$
  and we call $\sum_i\vec z_i\otimes x_i\in C_{p\kern-.6px,\kern.6pxq}(\bmX)$ with $x_i\ne *$
  \emph{projectable} if the restriction $\pr_2|_{\smash{\bigcup_i[\vec z_i]}}$
  is injective. Let
  $C''_{p\kern-.6px,\kern.6pxq}(\bmX)\subseteq
  C'_{p\kern-.6px,\kern.6pxq}(\bmX)$ be the subspace of pairs $(t,c)$ with
  projectable $c$. Then $C''_{p\kern-.6px,\kern.6pxq}(\bmX)$ is a \emph{partial}
  submonoid with respect to the concatenation, where projectable configurations
  can be multiplied if their product is again projectable. Moreover, we have a
  map
  $C''_{\smash{p\kern-.6px,\kern.6pxq}}(\bmX)\to
  D_{\smash{p-1\kern-.3px,\kern.3pxq}}(\bmX)$ of partial monoids by projecting,
  see \tref{Figure}{fig:proj}.  As in \cite{Segal-1973}, the induced maps
  \mbox{$N_\bullet C''_{p\kern-.6px,\kern.6pxq}(\bmX)\to N_\bullet
    D_{p-1\kern-.3px,\kern.3pxq}(\bmX)$} between the spaces of composable tuples
  are homotopy equivalences and, since $\bm{X}$ was assumed to be equivariantly
  well-based, our simplicial spaces are proper, so we have a homotopy
  equivalence among the classifying spaces
  $BC''_{p\kern-.6px,\kern.6pxq}(\bmX)\to
  BD_{p-1\kern-.3px,\kern.3pxq}(\bmX)$.\looseness-1

  \medskip
  
  4.\hspace*{.5em}In the last step, which is a bit lengthy and which we
  outsource into \tref{Lemma}{lem:segalFinal}, we show that the inclusion
  $C''_{p\kern-.6px,\kern.6pxq}(\bmX)\subseteq
  C'_{p\kern-.6px,\kern.6pxq}(\bmX)$ of (partial) monoids induces a homotopy
  equivalence among classifying spaces: this is the analogue of
  \cite[Prop.\,2.4]{Segal-1973}. We therefore end up with a zig-zag of homotopy equivalences
  \[\begin{tikzcd}BC'_{p\kern-.6px,\kern.6pxq}(\bmX) &
      BC''_{p\kern-.6px,\kern.6pxq}(\bmX)\ar{r}[swap]{\simeq}\ar{l}{\simeq} &
      BD_{p-1\kern-.3px,\kern.3pxq}(\bmX)\ar{r}[swap]{\cong} &
      C_{p-1\kern-.3px,\kern.3pxq}(\Sigma\bmX).\end{tikzcd}\]%
  Since all three maps leave the remaining $d-1$ coordinates unchanged, they are
  morphisms of $E_{d-1}$-algebras, so $BC'_{p,q}(\bmX)$ and
  $C_{p-1,q}(\Sigma\bmX)$ are equivalent as $E_{d-1}$-algebras.
\end{proof}

\enlargethispage{\baselineskip}

We are left to show \tref{Lemma}{lem:segalFinal}. Even though the proof is both
technical and very similar to Segal’s one, we decided to spell out some details,
as they show at which stages the verticality constraint is used.

\begin{figure}
  \centering
  \begin{tikzpicture}[xscale=3.8,yscale=3]
  \draw[dgrey] (0,0) rectangle (1,1);
  \draw[dgrey,thick] (.7,.7) -- (.7,.9);
  \draw[dgrey,thick] (.5,.85) -- (.5,.35);
  \draw[dgrey,thick] (.3,.8) -- (.26,.8) -- (.26,.4) -- (.3,.4);
  \draw[dgrey,thick] (.3,.6) -- (.34,.6) -- (.34,.2) -- (.3,.2);
  \draw[dgrey,thick] (.3,.3) -- (.34,.3);
  \node at (.3,.2) {\tiny $\bullet$};
  \node at (.3,.3) {\tiny $\bullet$};
  \node at (.3,.4) {\tiny $\bullet$};
  \node at (.3,.6) {\tiny $\bullet$};
  \node at (.3,.8) {\tiny $\bullet$};
  \node at (.7,.7) {\tiny $\bullet$};
  \node at (.7,.9) {\tiny $\bullet$};
  \node at (.5,.85) {\tiny $\bullet$};
  \node at (.5,.35) {\tiny $\bullet$};
  \draw[dgrey] (1.5,0) -- (1.5,1);
  \draw[dgrey,thick] (1.5,.8) -- (1.46,.8) -- (1.46,.4) -- (1.5,.4);
  \draw[dgrey,thick] (1.5,.6) -- (1.54,.6) -- (1.54,.2) -- (1.5,.2);
  \draw[dgrey,thick] (1.5,.3) -- (1.54,.3);
  \draw[dgrey,thick] (1.5,.35) -- (1.42,.35) -- (1.42,.85) -- (1.5,.85);
  \draw[dgrey,thick] (1.5,.7) -- (1.54,.7) -- (1.54,.9) -- (1.5,.9);
  \node at (1.5,.2) {\tiny $\bullet$};
  \node at (1.5,.3) {\tiny $\bullet$};
  \node at (1.5,.4) {\tiny $\bullet$};
  \node at (1.5,.6) {\tiny $\bullet$};
  \node at (1.5,.8) {\tiny $\bullet$};
  \node at (1.5,.7) {\tiny $\bullet$};
  \node at (1.5,.9) {\tiny $\bullet$};
  \node at (1.5,.85) {\tiny $\bullet$};
  \node at (1.5,.35) {\tiny $\bullet$};
  \node at (1.215,.5) {\small $\mapsto$};
\end{tikzpicture}
  \caption{An instance of the map $C''_{p\kern-.6px,\kern.6pxq}(\bmX)
    \to D_{p-1\kern-.3px,\kern.3pxq}(\bmX)$}\label{fig:proj}
\end{figure}

\begin{lem}\label{lem:segalFinal}
  The inclusion $C''_{p\kern-.6px,\kern.6pxq}(\bmX)\subseteq C'_{p\kern-.6px,\kern.6pxq}(\bmX)$
  induces an equivalence on classifying spaces.
\end{lem}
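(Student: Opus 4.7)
The plan is to show that the inclusion $C''_{p,q}(\bmX) \hookrightarrow C'_{p,q}(\bmX)$ induces a levelwise weak equivalence of simplicial spaces $N_\bullet C''_{p,q}(\bmX) \hookrightarrow N_\bullet C'_{p,q}(\bmX)$. Since both nerves are proper simplicial spaces (by the equivariant well-basedness hypothesis on $\bmX$, which ensures that the iterated unit maps and face inclusions are cofibrations), this yields the desired equivalence on classifying spaces. For each $n$, the target $N_n C'_{p,q}(\bmX) = (C'_{p,q}(\bmX))^n$ parametrises tuples $((t_i, c_i))_{i=1}^n$, which can be reorganised as a single labelled vertical configuration inside the slab $(0, s_n) \times \R^{d-1}$ (with $s_i \coloneqq t_1 + \cdots + t_i$) decorated by ``walls'' at the partial sums $s_i$; and $N_n C''_{p,q}(\bmX)$ is the subspace where the concatenated configuration is projectable, i.e.\ no two distinct particles share their image under $\pr_2$.

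The key geometric observation is that verticality constrains only the first $p$ coordinates of particles within a cluster. Translating an entire cluster in the first-coordinate direction therefore preserves verticality and the clustering. Moreover, non-projectability can arise only between particles of \emph{different} clusters, because within a cluster all first $p$ coordinates coincide while the last $q$ coordinates are pairwise distinct. This reduces the problem to a generic-position argument: in the Euclidean space of cluster-wise shifts along the first coordinate, the bad locus (shifts that leave the configuration non-projectable) is cut out by finitely many linear equations in one variable per cluster, hence has positive codimension. Consequently the set of good shifts is open and dense, and one may choose such shifts continuously in a neighbourhood of the zero shift.

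To promote this genericity statement to an honest levelwise equivalence, I would construct a homotopy $H\colon N_n C'_{p,q}(\bmX) \times [0,1] \to N_n C'_{p,q}(\bmX)$ with $H_0 = \mathrm{id}$ whose image at time $1$ lies in $N_n C''_{p,q}(\bmX)$. The construction uses a partition-of-unity argument localised around each cluster to continuously select a projectable perturbation, and is arranged so that $H_1$ restricted to $N_n C''$ remains projectable and is connected to the identity through projectable configurations. A two-out-of-three argument then gives that $N_n C'' \hookrightarrow N_n C'$ is a weak equivalence, finishing the proof.

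The principal obstacle is arranging that the family of perturbations is simultaneously continuous in the base point of $N_n C'$ and compatible with the simplicial face and degeneracy maps: the face maps forget one factor and regroup the others, so the local bump functions that cut out each cluster's perturbation must be chosen in a way that commutes with this regrouping. This is the delicate bookkeeping that mirrors Segal's original argument in \cite{Segal-1973}; the point I want to emphasise is that it adapts to the vertical setting verbatim, because first-coordinate translations of whole clusters both preserve verticality and commute with the monoidal concatenation, so the clustering structure introduces no new interaction with the patching combinatorics.
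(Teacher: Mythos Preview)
Your plan has a genuine gap: the inclusion $N_nC''_{p,q}(\bmX)\hookrightarrow N_nC'_{p,q}(\bmX)$ is \emph{not} a levelwise weak equivalence, already for $n=1$ and already in Segal's classical setting. Take $\bmX=\S^0[1]$, $p=q=1$, and look at the component with exactly two particles: there $C'$ has the homotopy type of $C_2(\R^2)\simeq S^1$, while the projectable locus $C''$ (two points with distinct second coordinate) is contractible. Removing the codimension-$1$ ``same $\pr_2$-image'' locus genuinely changes the homotopy type. This is precisely why Segal does \emph{not} argue levelwise in \cite{Segal-1973} but instead passes to the topological category $\bfC(M)$ and an auxiliary ordered space.

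There is also a more immediate error in your perturbation scheme. Projectability is a condition on $\pr_2$, which records coordinates $2,\dotsc,d$; translating a cluster in the \emph{first} coordinate leaves $\pr_2$ unchanged, so the ``bad locus'' in your space of first-coordinate shifts is either empty or everything, and no genericity argument is available. Shifting whole clusters in directions $2,\dotsc,d$ would at least move the $\pr_2$-image, but even then you cannot deformation retract onto the projectable locus for the reason above. The paper instead follows Segal: it rewrites $BC'$ and $BC''$ as realisations of partially ordered spaces $Q\supseteq P$ (triples $(a,b,c)$ ordered by restriction of the configuration to subintervals), and applies Segal's criterion \cite[Prop.\,2.7]{Segal-1973} via a numerable cover $W_\delta=\{(a,b,c):c|_{[-\delta,\delta]}\text{ almost projectable}\}$. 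The verticality hypothesis enters only to ensure that $\pr_1(\vec z_i)$ is a single value, so that the restriction $c|_{[a,b]}$ is well-defined.
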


\begin{proof}
  There is an equivalent description of $BM$ for a (partial) monoid $M$:
  consider the topological category $\textbf{C}(M)$ with object space $M$, and
  arrows $m\to m'$ being pairs $(m_1,m_2)\in M\times M$ with
  $m_1\cdot m\cdot m_2=m'$. Then $BM \cong |\bfC(M)|$, see
  \cite[Prop.\,2.5]{Segal-1973}.
  
  Let $Q$ be the space of triples $(a,b,c)$ with $a\le 0\le b$ and
  $c=\sum_i \vec z_i\otimes x_i\in C_{p\kern-.6px,\kern.6pxq}(\bmX)$ with
  support in $(a;b)$. We give $Q$ a partial order as follows: For each interval
  $L\subseteq \R$ and $c\in C_{p\kern-.6px,\kern.6pxq}(\bmX)$ whose support
  avoids $\partial L$, we define $c|_L$ as the subconfiguration that comprises
  all $\vec z_i\otimes x_i$ satisfying $\pr_1(\vec z_i)\in L$.  Here we use that
  $\pr_1(\vec z_i)$ is a single value in $\R$ by the verticality condition. Now
  we let $(a,b,c)\le (a',b',c')$ if $[a;b]\subseteq [a';b']$ and
  $c=c'|_{[a;b]}$, see \tref{Figure}{fig:ordered}. We get a functor
  $\pi\colon Q\to \bfC(C'_{p\kern-.6px,\kern.6pxq}(\bmX))$,
  \mbox{$\pi(a,b,c)\coloneqq (b-a,T_{-a}c)$} and we can copy
  \mbox{\cite[Lem.\,2.6]{Segal-1973}} verbatim to show that $\abs{\pi}$ is
  shrinkable, i.e.\ it has a section $s$ such that $s\circ|\pi|\simeq \on{id}$ by a homotopy $h_\bullet$ with $|\pi|\circ h_t=|\pi|$ for all $t$. Let
  $P\subseteq Q$ be the subspace of all $(a,b,c)$ with projectable $c$. Then
  $\pi(P)=C''_{p\kern-.6px,\kern.6pxq}(\bmX)$, so it is enough to show that
  $|P|\to |Q|$ is a homotopy equivalence. To do so, we use
  \cite[Prop.\,2.7]{Segal-1973}:

  \begin{figure}
  \centering
  \begin{tikzpicture}[xscale=3,yscale=2.7]
  \draw[thin,dgrey]  (.5,-.03) -- (.5,1.03);
  \draw[thin,dgrey] (0,0) rectangle (1,1);
  \node at (.5,-.08) {\tiny $0$};
  \draw[dgrey,thick] (.7,.7) -- (.7,.9);
  \draw[dgrey,thick] (.3,.8) -- (.26,.8) -- (.26,.4) -- (.3,.4);
  \draw[dgrey,thick] (.3,.6) -- (.34,.6) -- (.34,.2) -- (.3,.2);
  \draw[dgrey,thick] (.3,.3) -- (.34,.3);
  \node at (.3,.2) {\tiny $\bullet$};
  \node at (.3,.3) {\tiny $\bullet$};
  \node at (.3,.4) {\tiny $\bullet$};
  \node at (.3,.6) {\tiny $\bullet$};
  \node at (.3,.8) {\tiny $\bullet$};
  \node at (.7,.7) {\tiny $\bullet$};
  \node at (.7,.9) {\tiny $\bullet$};
  \node at (1.25,.5) {\small $\le$};
  \draw[thin,dgrey]  (2,-.03) -- (2,1.03);
  \draw[thin,dgrey] (1.5,0) rectangle (3,1);
  \node at (2,-.08) {\tiny $0$};
  \draw[dgrey,thick] (2.2,.7) -- (2.2,.9);
  \draw[dgrey,thick] (1.8,.8) -- (1.76,.8) -- (1.76,.4) -- (1.8,.4);
  \draw[dgrey,thick] (1.8,.6) -- (1.84,.6) -- (1.84,.2) -- (1.8,.2);
  \draw[dgrey,thick] (1.8,.3) -- (1.84,.3);
  \draw[dgrey,thick] (2.7,.4) -- (2.7,.9);
  \draw[dgrey,thick] (2.85,.7) -- (2.81,.7) -- (2.81,.4) -- (2.85,.4);
  \draw[dgrey,thick] (2.85,.6) -- (2.89,.6) -- (2.89,.25) -- (2.85,.25);
  \node at (1.8,.2) {\tiny $\bullet$};
  \node at (1.8,.3) {\tiny $\bullet$};
  \node at (1.8,.4) {\tiny $\bullet$};
  \node at (1.8,.6) {\tiny $\bullet$};
  \node at (1.8,.8) {\tiny $\bullet$};
  \node at (2.2,.7) {\tiny $\bullet$};
  \node at (2.2,.9) {\tiny $\bullet$};
  \node at (2.7,.4) {\tiny $\bullet$};
  \node at (2.7,.9) {\tiny $\bullet$};
  \node at (2.85,.25) {\tiny $\bullet$};
  \node at (2.85,.4) {\tiny $\bullet$};
  \node at (2.85,.6) {\tiny $\bullet$};
  \node at (2.85,.7) {\tiny $\bullet$};
\end{tikzpicture}
  \caption{The left configuration is \emph{smaller} than the right
    one since it is a restriction of the latter.}\label{fig:ordered}
\end{figure}

  \begin{quote}
    \textsc{Proposition.}~\itshape Let $Q$ be a good\footnote{
      A \emph{good ordered space} is an ordered space $Q$ such
      that its nerve is a good simplicial space. A topological monoid $(M,1)$ is good
      if $1$ has a contractible neighbourhood, see \cite[\textsc{a}2]{Segal-1973}.}
    ordered space such that:
    \begin{enumerate}[joinedup,packed]
    \item[\textsc{q1.}] For $\nu_1,\nu_2,\nu\in Q$ with $\nu_1,\nu_2\le \nu$
      there exists $\inf(\nu_1,\nu_2)$,
    \item[\textsc{q2.}] Wherever defined, $(\nu_1,\nu_2)\mapsto \inf(\nu_1,\nu_2)$
      is continuous,
    \end{enumerate}
    Moreover let $Q'\subseteq Q$ be open such that:
    \begin{enumerate}[joinedup,packed]
      \setcounter{enumi}{2}
    \item[\textsc{q3.}] For $\nu'\in Q'$ and $\nu\le \nu'$,
      we have $\nu\in Q'$,
    \item[\textsc{q4.}] There is a numerable open cover $(W_i)_{i\in I}$ and 
      $w_i\colon W_i\to Q'$ with $w_i(\nu)\le \nu$.
    \end{enumerate}
    Then $\abs{Q'}\to \abs{Q}$ is a homotopy equivalence.
  \end{quote}
  As in \cite[\textsc{a2}]{Segal-1973}, our special $Q$ is good; and additionally,
  the assumptions \textsc{q1} and \textsc{q2} are satisfied by the explicit
  construction of our order.
  
  Since $\bm{X}$ is equivariantly well-based, there are contractible and
  $\frS_k$-invariant neighbourhoods $U_k\subseteq X_k$ around the respective
  basepoints $*_k$, and equivariant homotopies moving $U_k$ into $*_k$.  We
  ‘thicken’ $P$ to an open subset $Q'\subseteq Q$ containing all
  configu\-rations that are projectable once we ignore clusters labelled in some
  $U_k$; we call these configurations \emph{almost projectable}. Then
  $\imath\colon |P|\to |Q'|$ is a homotopy equivalence, with retraction $\rho$
  given by forgetting clusters labelled in some $U_k$, the homotopy
  $\imath\circ\rho\simeq \mathrm{id}_{\smash{|Q'|}}$ induced by the homotopies
  from above. Moreover, $Q'$ satisfies \textsc{q3} since restrictions of
  projectables are still projectable. As a cover, we define, for each
  $n\ge 1$,\looseness-1
  \[W_n\coloneqq\mathopen{}\Big\{(a,b,c);\,c|_{\smash{[-\frac1n;\frac1n]}}
      \text{ is almost projectable}\Big\}\mathclose{}.\]
  Then $W_n\subseteq Q$ is open,
  $(W_n)_{n\ge 1}$ is numerable, and since each $c$ has only
  finitely many clusters, each $c$ admits a $n>0$ such that
  $c|_{\smash{[-\frac1n;\frac1n]}}$ projects to at most one
  point in $(-\frac1n;\frac1n)$; hence the restriction has to be
  projectable: therefore, $(W_n)_{n\ge 1}$ is exhaustive. Finally, the maps
  $w_n\colon W_n\to Q'$ with
  $w_n(a,b,c)\coloneqq (\max(a,-\frac1n),\min(b,\frac1n),c|_{\smash{[-\frac1n;\frac1n]}})$
  satisfy \textsc{q4}.
\end{proof}

Combining \tref{Theorem}{thm:BCRX} and \tref{Theorem}{thm:segal},
we obtain the following result:

\begin{cor}\label{cor:both}
  Let $\bmX$ be equivariantly well-based. Then we have an equivalence
  \[\textstyle B^{p+1}C(\R^{p\kern-.3px,\kern.2px1};\bmX)\simeq
    \Sigma^{p+1}\bigvee_{e\in \bbE}\Sigma^{p\cdot (\# e-1)}\bmX^{\wedge K(e)}.\]
\end{cor}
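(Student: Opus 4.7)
The plan is a direct two-step combination of the two main theorems, with a routine suspension identity at the end. Setting $d = p+1$ and $q = 1$ in Theorem~\ref{thm:segal}, and using that our hypothesis ``$\bmX$ equivariantly well-based'' is exactly the one assumed there, we obtain an equivalence of $E_1$-algebras
\[B^{p}C(\R^{p\kern-.3px,\kern.2px1};\bmX)\simeq C(\R;\Sigma^p\bmX).\]
Since this is an equivalence of $E_1$-algebras, applying the bar construction once more yields an equivalence of based spaces
\[B^{p+1}C(\R^{p\kern-.3px,\kern.2px1};\bmX)\simeq BC(\R;\Sigma^p\bmX).\]

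Next, I would verify that Theorem~\ref{thm:BCRX} applies to the symmetric sequence $\Sigma^p\bmX$: each $\Sigma^p X_k$ is automatically well-based (the basepoint inclusion into a reduced suspension is a cofibration), so the hypothesis is satisfied. Thus
\[\textstyle BC(\R;\Sigma^p\bmX)\simeq \Sigma\bigvee_{e\in\bE}(\Sigma^p\bmX)^{\wedge K(e)}.\]

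The final step is the suspension identity: writing $K(e)=(k_1,\dotsc,k_{\# e})$, we have
\[(\Sigma^p\bmX)^{\wedge K(e)} = \Sigma^p X_{k_1}\wedge\dotsb\wedge \Sigma^p X_{k_{\#e}}\cong \Sigma^{p\cdot\# e}\bmX^{\wedge K(e)},\]
using that $\Sigma^p A\wedge \Sigma^p B\cong \Sigma^{2p}(A\wedge B)$ iteratively. Factoring out $\Sigma^p$ (to balance with the leading $\Sigma$) gives $\Sigma^{1+p\cdot \# e}\bmX^{\wedge K(e)} = \Sigma^{p+1}\Sigma^{p\cdot(\# e - 1)}\bmX^{\wedge K(e)}$, and pulling the common $\Sigma^{p+1}$ out of the wedge produces exactly the stated formula.

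There is no real obstacle here: both ingredients are already established, and the only thing to check carefully is the bookkeeping of suspension indices and that the hypothesis of Theorem~\ref{thm:BCRX} (ordinary well-basedness) is inherited by $\Sigma^p\bmX$ from the equivariant well-basedness of $\bmX$. The symmetric group actions on the $(\Sigma^p\bmX)^{\wedge K(e)}$ are, as remarked after Theorem~\ref{thm:BCRX}, irrelevant for the homotopy type on the right.
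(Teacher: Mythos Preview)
Your proposal is correct and is exactly the argument the paper intends: the corollary is stated immediately after \tref{Theorem}{thm:segal} with the single line ``Combining \tref{Theorem}{thm:BCRX} and \tref{Theorem}{thm:segal}, we obtain the following result'', and you have simply spelled out that combination together with the suspension bookkeeping.
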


\section{Stable homology of vertical configuration spaces}
\label{sec:calc}
We want to use the previously established homotopical results for an explicit
homological statement about vertical configuration spaces: throughout this section,
let $p\ge 1$ and $k\ge 1$. By inserting a new $k$-cluster on the far right side, we
have stabilising maps
$C_{\smash{r\times k}}(\R^{p,1})\to C_{\smash{(r+1)\times k}}(\R^{p,1})$, see
\tref{Figure}{fig:stab}. Extending work of \cite{Tran,Palmer-2021,Latifi},
it is shown in \cite[Thm.\,4.3]{Bianchi-Kranhold} that the induced map in
$H_\bullet(-;\Z)$ is split injective, and an isomorphism if
$\bullet\le \frac{r}{2}$. We give a description of the stable
homology $H_\bullet(C_{\smash{\infty\times k}}(\R^{p,1}))$.

\begin{figure}[h]
  \centering
  \begin{tikzpicture}[xscale=3,yscale=2.6]
  \draw[thin,dgrey] (0,0) rectangle (1,1);
  \draw[dgrey,thick] (.7,.7) -- (.7,.5);
  \draw[dgrey,thick] (.7,.3) -- (.7,.2);
  \draw[dgrey,thick] (.4,.87) -- (.4,.35);
  \draw[dgrey,thick] (.3,.8) -- (.26,.8) -- (.26,.4) -- (.3,.4);
  \draw[dgrey,thick] (.3,.6) -- (.34,.6) -- (.34,.3) -- (.3,.3);
  \node at (.3,.3) {\tiny $\bullet$};
  \node at (.3,.4) {\tiny $\bullet$};
  \node at (.3,.6) {\tiny $\bullet$};
  \node at (.3,.8) {\tiny $\bullet$};
  \node at (.7,.5) {\tiny $\bullet$};
  \node at (.7,.7) {\tiny $\bullet$};
  \node at (.7,.2) {\tiny $\bullet$};
  \node at (.7,.3) {\tiny $\bullet$};
  \node at (.4,.87) {\tiny $\bullet$};
  \node at (.4,.35) {\tiny $\bullet$};
\end{tikzpicture}\quad\raisebox{34px}{$\mapsto$}\quad
\begin{tikzpicture}[xscale=3,yscale=2.6]
  \draw[thin,bgrey] (1,0) -- (1,1);
  \draw[thin,dgrey] (0,0) rectangle (1.5,1);
  \draw[dgrey,thick] (.7,.7) -- (.7,.5);
  \draw[dgrey,thick] (.7,.3) -- (.7,.2);
  \draw[dgrey,thick] (.4,.87) -- (.4,.35);
  \draw[dgrey,thick] (.3,.8) -- (.26,.8) -- (.26,.4) -- (.3,.4);
  \draw[dgrey,thick] (.3,.6) -- (.34,.6) -- (.34,.3) -- (.3,.3);
  \draw[bblue,thick] (1.25,.4) -- (1.25,.6);
  \node at (.3,.3) {\tiny $\bullet$};
  \node at (.3,.4) {\tiny $\bullet$};
  \node at (.3,.6) {\tiny $\bullet$};
  \node at (.3,.8) {\tiny $\bullet$};
  \node at (.7,.5) {\tiny $\bullet$};
  \node at (.7,.7) {\tiny $\bullet$};
  \node at (.7,.2) {\tiny $\bullet$};
  \node at (.7,.3) {\tiny $\bullet$};
  \node at (.4,.87) {\tiny $\bullet$};
  \node at (.4,.35) {\tiny $\bullet$};
  \node[dblue] at (1.25,.4) {\tiny $\bullet$};
  \node[dblue] at (1.25,.6) {\tiny $\bullet$};
\end{tikzpicture}
  \caption{The stabilisation $C_{5\times 2}(\R^{1,1})\to C_{6\times 2}(\R^{1,1})$}\label{fig:stab}
\end{figure}

\enlargethispage{\baselineskip}

\begin{constr}[Coloured configuration spaces]\label{constr:colConf}
  Let $I$ be an index set and $\alpha=(\alpha_i)_{i\in I}$ be a finitely
  supported family of non-negative integers (i.e.\ $\alpha_i\ne 0$ for only
  finitely many $i\in I$). For each space $E$, the group
  $\prod_{i\in I}\frS_{\alpha_i}$ acts freely on $\tC_{\smash{|\alpha|}}(E)$,
  and we define the \emph{coloured configuration space}
  \[\textstyle C^\alpha(E) \coloneqq \tC_{|\alpha|}(E)/\prod_{i\in I}\frS_{\alpha_i}.\]
  This definition is rather similar to the one of the clustered configuration
  space $C_\alpha(E)$ from \tref{Definition}{def:clustConf}, but we quotient out
  a bit less: intuitively, a point in $C^\alpha(E)$ is
  a disjoint configuration of unordered \emph{coloured} particles, exactly
  $\alpha_i$ particles of colour $i$.
  Coloured configuration spaces have been studied in \cite{Palmer-2018}.

  A \emph{parity map} is an assignment $t\colon I\to \Z_2$: it merely divides
  $I$ into ‘odd’ and ‘even’ colours. For each parity map and each finitely
  supported tuple $\alpha=(\alpha_i)_{i\in I}$, we have a sign function
  $\prod_i \frS_{\alpha_i}\to \{\pm 1\}$ sending $(\sigma_i)_{i\in I}$ to
  the product of signs
  $\prod_i \on{sg}(\sigma_i)^{\smash{t(i)}}$. Via the canonical projection
  $\pi_1(C^\alpha(E))\to \prod_i \frS_{\alpha_i}$, this gives rise to a local
  system $\epsilon^\alpha$ on $C^\alpha(E)$. If the parity map is
  clear from the context, we write\looseness-1
  \[M_\bullet(E;\alpha) \coloneqq H_\bullet(C^\alpha(E);\epsilon^\alpha).\]
\end{constr}

Although \tref{Construction}{constr:colConf} might seem unrelated at first
glance, the modules $M_\bullet(E;\alpha)$ are useful to describe the homology
of vertical configuration spaces:

\begin{defi}\label{defi:TRS}
  Let $\bbE[k]\subseteq\bE$ be the subset of entanglement types
  $e=(S_1,\dotsc,S_w)$ such that each $S_i$ is of cardinality $k$.  Fixing a
  dimension $p\ge 1$, the parity of $e$ is defined to be $p\cdot (\# e-1)$, that
  is: an entanglement type is \emph{even} if it has odd weight or if $p$ is
  even.

  For each finite tuple $\alpha=(\alpha_e)_{\smash{e\in\bbE[k]}}$, we define
  $r(\alpha)\coloneqq \sum_e \alpha_e\cdot \# e$ and
  $s(\alpha)\coloneqq r(\alpha)-|\alpha|$. Intuitively, $\alpha$ tells us how
  often which entanglement type can be seen, $r(\alpha)$ tells us how many
  \emph{clusters} are involved, and $s(\alpha)$ measures the difference between
  the number of clusters and the number of entanglement types.
\end{defi}

In \cite[§\,4]{Bianchi-Kranhold}, we introduce a filtration
$\scF_\bullet C_{\smash{r\times k}}(\R^{p,1})$, and
in \cite[Prop.\,4.19]{Bianchi-Kranhold}, we establish an isomorphism
of graded abelian groups
\begin{align}\label{eq:BK}
  H_\bullet(\scF_sC_{\smash{r\times k}}(\R^{p,1}),
  \scF_{\smash{s-1}}C_{\smash{r\times k}}(\R^{p,1}))\cong
  \bigoplus_{(r(\alpha),s(\alpha))=(r,s)}
  M_{\bullet-p\cdot s}(\R^{p+1};\alpha)
\end{align}
as follows: given a coloured configuration, we ‘insert’, at each particle in
$\R^{p+1}$ of colour $e$, a standard configuration that realises the
entanglement type $e$ along a vertical line. The degree shift and the sign
system is caused—via the Thom isomorphism—by small perturbations of the
clusters, tracking all possibilities how to ‘break’ an entanglement.\looseness-1

However, we could not determine if the Leray spectral sequence
associated with the above filtration collapses on its first page and if
the extension problem is trivial \cite[Outl.\,4.22]{Bianchi-Kranhold}: this
would imply that
\mbox{$H_\bullet(C_{\smash{r\times k}}(\R^{p,1}))\cong \bigoplus_{r(\alpha)=r} M_{\bullet-p\cdot
    s(\alpha)}(\R^{p+1};\alpha)$}.
We show that this is at least stably the case.

\begin{constr}[Stabilisation]
  Let $I$ be an index set as before, and we pick a distinguished colour
  $i_0\in I$.  If $\lambda=(\lambda_i)_{i\in I\setminus\{i_0\}}$ is a finitely
  supported tuple of integers $\lambda_i\ge 0$ and $n\ge |\lambda|$ is an
  integer, then we let $\lambda[n]$ be the $I$-indexed tuple that additionally
  contains the entry $\lambda_{\smash{i_0}}=n-|\lambda|$.

  Adding a point of colour $i_0$ on the far right side gives rise to a
  stabilisation map $C^{\lambda[n]}(\R^d)\to C^{\lambda[n+1]}(\R^d)$ among
  coloured configuration spaces. For each parity map, the local system
  $\epsilon^{\smash{\lambda[n+1]}}$ restricts to $\epsilon^{\smash{\lambda[n]}}$ along
  the stabilisation map, and for the case in which $i_0$ has even parity, it is shown in
  \cite[Lem\,4.21]{Bianchi-Kranhold} that the induced map
  \mbox{$M_\bullet(\R^d;\lambda[n])\to M_\bullet(\R^d;\lambda[n+1])$} is split
  injective, and bijective for $\smash{\bullet\le \frac{n-|\lambda|}{2}}$: this
  is a signed version of \cite[Cor.\,\textsc{c}]{Palmer-2018}. We define the
  stable module
  \[M_\bullet(\R^d;\lambda[\infty])\coloneqq \varinjlim_n M_\bullet(\R^d;\lambda[n]).\]
\end{constr}

\begin{expl}
  There is a single entanglement type $e_0=(\set{1,\dotsc,k})\in\bbE[k]$ of
  weight $1$; it clearly has even parity for each $p\ge 1$.  Adding a cluster
  $C_{\smash{r\times k}}(\R^{p,1})\to C_{\smash{(r+1)\times k}}(\R^{p,1})$
  preserves the aforementioned filtration from \cite{Bianchi-Kranhold}, and
  translates via (\ref{eq:BK}) to the stabilisations
  $C^{\lambda[n]}(\R^{p+1})\to C^{\lambda[n+1]}(\R^{p+1})$ by adding a particle
  of colour $e_0$. This was the key ingredient for the proof of homological
  stability \cite[Thm.\,4.3]{Bianchi-Kranhold}. Finally, note that 
  $s(\lambda[n])$ from \tref{Definition}{defi:TRS} is independent of $n$, so we
  can just write $s(\lambda)$.
\end{expl}

\setcounter{aThm}{2}
\begin{aThm}\label{thm:stableH}
  For each $p\ge 1$, we have an isomorphism of graded abelian groups
  \[\textstyle H_\bullet(C_{\smash{\infty\times k}}(\R^{p,1}))\cong
    \bigoplus_{\lambda} M_{\bullet-p\cdot s(\lambda)}(\R^{p+1};\lambda[\infty]),\]
  where $\lambda$ ranges in the set of finitely supported tuples indexed by $\bbE[k]\setminus\{e_0\}$.
\end{aThm}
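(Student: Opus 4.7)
The plan is to combine Corollary~\ref{cor:both} with group completion to identify the stable homology with that of an iterated loop space on a wedge of spheres, and then decompose this via coloured configuration spaces and the Thom isomorphism. Throughout, write $C\coloneqq C(\R^{p\kern-.3px,\kern.2px1};\S^0[k])$, whose $r$-th path component is $C_{\smash{r\times k}}(\R^{p\kern-.3px,\kern.2px1})$. First, applying the group completion theorem to the $E_{p+1}$-algebra $C$ together with homological stability \cite[Thm.\,4.3]{Bianchi-Kranhold} yields the identification $H_\bullet(C_{\smash{\infty\times k}}(\R^{p\kern-.3px,\kern.2px1}))\cong H_\bullet(\Omega^{p+1}_0 B^{p+1}C)$. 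Next, specialising Corollary~\ref{cor:both} to $\bmX=\S^0[k]$, we have $\bmX^{\wedge K(e)}=\S^0$ precisely for $e\in\bbE[k]$ and a point otherwise, giving $B^{p+1}C\simeq \Sigma^{p+1}Y$ where $Y\coloneqq \bigvee_{e\in\bbE[k]}S^{p(\#e-1)}$. Via May's approximation $C(\R^{p+1};Y)\to \Omega^{p+1}\Sigma^{p+1}Y$ (a group completion, as $Y$ is not path-connected), the task reduces to computing $\varinjlim_n H_\bullet(C(\R^{p+1};Y)_n)$, where $n$ counts particles labelled by the non-basepoint point of the $S^0$-summand corresponding to $e_0$---the only isolated point of $Y$.

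The key step is to decompose $C(\R^{p+1};Y)$ by label colours. The labelled configuration space carries a filtration by total number of non-basepoint particles, whose associated graded pieces are $\tilde C_r(\R^{p+1})_+\wedge_{\frS_r} Y^{\wedge r}$. Since $Y^{\wedge r}$ is itself a wedge of spheres indexed by colour assignments $\ul{r}\to\bbE[k]$, splitting into $\frS_r$-orbits and applying the Thom isomorphism identifies the reduced homology of the $r$-th graded piece with $\bigoplus_{|\alpha|=r}M_{\bullet-p\cdot s(\alpha)}(\R^{p+1};\alpha)$. The sign local system $\scO^\alpha$ emerges precisely from the $\prod_e\frS_{\alpha_e}$-action permuting coordinates on $S^{p(\#e-1)}$, matching the paper's parity convention $e\mapsto p(\#e-1)\bmod 2$; this recovers exactly the $E^1$-page of the Bianchi-Kranhold spectral sequence \cite[Prop.\,4.19]{Bianchi-Kranhold}. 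I would then invoke the Snaith-type stable splitting of labelled configuration spaces with wedge-of-spheres labels to conclude that this spectral sequence collapses at $E^1$ with trivial extensions, yielding $H_\bullet(C(\R^{p+1};Y)_n)\cong \bigoplus_\lambda M_{\bullet-p\cdot s(\lambda)}(\R^{p+1};\lambda[n])$ with $\lambda$ ranging over finitely supported tuples indexed by $\bbE[k]\setminus\{e_0\}$. Passing to $\varinjlim_n$ commutes with the direct sum and, since the degree shift $p\cdot s(\lambda)$ is independent of $n$ (because $p(\#e_0-1)=0$), turns each $\lambda[n]$ into $\lambda[\infty]$, giving the theorem.

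The main obstacle is precisely the collapse and extension-triviality of the filtration spectral sequence in the last step---the question left open in \cite[Outl.\,4.22]{Bianchi-Kranhold}. For general labels $\bmX$ this collapse cannot be expected, but for the wedge-of-spheres labelling $Y$ arising here it should follow from the Snaith splitting. Two subsidiary verifications will be needed: first, checking that the orientation twist from the Thom isomorphism produces exactly Palmer's sign local system $\scO^\alpha$; and second, ensuring that the cluster-stabilisation $C_{\smash{r\times k}}(\R^{p\kern-.3px,\kern.2px1})\to C_{\smash{(r+1)\times k}}(\R^{p\kern-.3px,\kern.2px1})$ corresponds under the iterated-bar identification to the $e_0$-particle stabilisation on the right-hand side, which preserves the sign twist because $p(\#e_0-1)=0$.
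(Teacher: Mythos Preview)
Your proposal is correct and follows essentially the same route as the paper: group completion to pass to $\Omega^{p+1}_0 B^{p+1}C$, Corollary~\ref{cor:both} to identify $B^{p+1}C\simeq\Sigma^{p+1}Y$ with $Y=\bigvee_{e\in\bbE[k]}\S^{p(\#e-1)}$, a second application of group completion to reduce to the stable homology of $C(\R^{p+1};Y)$, then the Snaith-type stable splitting into Thom spaces $D^\alpha$ over coloured configuration spaces $C^\alpha(\R^{p+1})$, and finally the Thom isomorphism (with the sign system arising exactly as you describe). The paper handles your ``main obstacle'' precisely as you anticipate---by invoking the Snaith splitting directly rather than phrasing it as collapse of a filtration spectral sequence---and the two subsidiary checks you flag (that the orientation twist is $\scO^\alpha$, and that the stabilisations correspond) are dispatched by reference to \cite{CFB-Cohen-Taylor} and a short compatibility remark.
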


Before proving the theorem, we note that for $k=1$, both sides are clearly the
same: as the verticality condition becomes empty, we have
$C_{\smash{\infty\times k}}(\R^{p,1})=C_\infty(\R^{p+1})$.  On the other hand, since
$\bbE[1]$ contains \emph{only} the distinguished entanglement type $e_0$, the
only possible $\lambda$ is the empty tuple, and in this case, $s(\lambda)=0$ and
$M_\bullet(\R^{p+1};\lambda[\infty])$ is just the stable homology of the sequence of spaces
$C_n(\R^{p+1})$.

\begin{proof}
  As before, let $\S^0[k]$ be the based symmetric sequence whose
  $k$\textsuperscript{th} space is the $0$-sphere $\S^0$, and whose
  remaining constituents are trivial.
  As in \tref{Remark}{rem:labEffect}, the labelled vertical configuration
  space $C(\R^{p,1};\S^0[k])$ is isomorphic to $\coprod_r C_{\smash{r\times k}}(\R^{p,1})$.
  Since $p\ge 1$, $C(\R^{p,1};\S^0[k])$ is at least an $E_2$-algebra,
  in particular $H$-commutative. Hence the
  group completion theorem \cite[Prop.\,1]{McDuff-Segal} applies and we calculate the
  stable homology as
  \[H_\bullet\mathopen{}\big(C_{\infty\times k}(\R^{p,1})\big)\mathclose{}
    \cong H_\bullet\mathopen{}\big(\Omega_{0}^{\smash{p+1}\vphantom{G_a}}
    B^{\smash{p+1}\vphantom{G_a}}_{\vphantom{0}}C(\R^{p,1};\S^0[k])\big)\mathclose{},\]
  where $\Omega_0$ denotes the path component of the constant loop.  Using
  \tref{Corollary}{cor:both}, we obtain
  $B^{p+1}C(\R^{p,1};\S^0[k])\simeq \Sigma^{p+1}\bigvee_e \S^{p\cdot (\# e-1)}$,
  where $e$ ranges in $\bbE[k]$. Now we use that this space can be desuspended
  $p+1$ times, i.e.\ we calculate the stable homology of a \emph{free}
  $E_{p+1}$-algebra: the bouquet $\bigvee_e \S^{p\cdot (\# e-1)}$ has
  two path components, namely $\bigvee_{\# e\ge 2}\bS^{p\cdot (\# e-1)}$,
  which also contains the basepoint, and $\{e_0\}$. 
  If we let $C_m\subseteq C(\R^{p+1};\bigvee_e \bS^{p\cdot (\# e-1)})$ be the
  component of configurations with exactly $m$ particles labelled by $e_0$, then we have stabilisations
  $C_m\to C_{m+1}$ by adding a particle with label $e_0$, and we denote its colimit by $C_\infty$.
  By applying the group completion theorem once again, we obtain
  \begin{align*}
    \textstyle H_\bullet\pa{C_{\infty\times k}(\R^{p,1})}
    \cong \textstyle H_\bullet\mathopen{}\big(\Omega^{\smash{p+1}\vphantom{G_a}}_0\Sigma^{\smash{p+1}\vphantom{G_a}}_{\vphantom 0}
      \bigvee_{e}\bS^{p\cdot (\# e-1)}\big)\mathclose{}
    \cong \textstyle H_\bullet\mathopen{}\big(C_\infty\big)\mathclose{}.
  \end{align*}
  The space $C_m$ admits a stable splitting
  $\smash{\Sigma^\infty_+ C_m\simeq \Sigma^\infty \bigvee_\alpha D^\alpha}$ in
  the spirit of \cite{Snaith-1974}, where $\alpha$ ranges in
  tuples with $\alpha_{e_0}=m$, and $D^\alpha$ is the subspace
  of configurations that have, for each $e$, at most
  $\alpha_e$ particles with labels in the sphere corresponding to $e$, quotiented
  by the subspace of configurations where at least one of these labels is the basepoint.\looseness-1
  
  As in \cite[§\,2.6]{CFB-Cohen-Taylor}, $D^\alpha$ is the Thom space of a disc
  bundle over $C^\alpha(\R^{p+1})$ (whose sign system is exactly $\epsilon^\alpha$),
  so we get a Thom isomorphism
  $\smash{\tilde H_\bullet(D^\alpha) \cong M_{\bullet-p\cdot
    s(\alpha)}(\R^{p+1};\alpha)}$. Altogether, we have
  $H_\bullet(C_m)\cong \bigoplus_\alpha M_{\bullet-p\cdot
    s(\alpha)}(\R^{p+1};\alpha)$, where $\alpha$ ranges in tuples
  with $\alpha_{e_0}=m$.  Under this identification, the stabilisation maps
  $H_\bullet(C_m)\to H_\bullet(C_{\smash{m+1}})$ split as the sum of stabilising maps
  $M_{\bullet-p\cdot s(\lambda)}(\R^{p+1};\lambda[n])\to M_{\bullet-p\cdot
    s(\lambda)}(\R^{p+1};\lambda[n+1])$, indexed by all $\lambda$ and with
  $n=m+|\lambda|$. This proves the claim.
\end{proof}

\printbibliography[heading=bibintoc]

\small

\noindent\textbf{Florian Kranhold}\quad
Karlsruher Institut für Technologie,
  Fakultät für Mathematik,
  Institut für Algebra und Geometrie,
  Englerstraße 2,
  76131 Karlsruhe,
  Germany,
\mail{kranhold@kit.edu}

\end{document}